\newcommand{\im}{\textup{i}}
\theoremstyle{thmstyleone}%
\newtheorem{theorem}{Theorem}[section]
\newtheorem{proposition}[theorem]{Proposition}%
\theoremstyle{thmstylethree}%
\newtheorem{definition}{Definition}%
\newtheorem{corollary}{Corollary}[section] 
\newtheorem{lemma}[theorem]{Lemma}
\begin{document}

\title[Equivalence of Synchronization States in the Kuramoto Flow: A Unified Framework]{Equivalence of Synchronization States in the Kuramoto Flow: A Unified Framework}


\author*[1]{\fnm{Ting-Yang} \sur{Hsiao}}\email{thsiao@sissa.it}

\author[2]{\fnm{Yun-Feng} \sur{Lo}}\email{ylo49@gatech.edu}

\author[3]{\fnm{Chengbin} \sur{Zhu}}\email{cz43@illinois.edu}

\affil*[1]{\orgdiv{Mathematics}, \orgname{Scuola Internazionale Superiore di Studi Avanzati (SISSA)}, \orgaddress{\street{via Bonomea 265}, \postcode{34136}, \state{Trieste}, \country{Italy}}}

\affil[2]{\orgdiv{School of Electrical and Computer Engineering}, \orgname{Georgia Institute of
Technology}, \orgaddress{\street{777 Atlantic Drive NW}, \city{Atlanta}, \postcode{30332}, \state{Georgia}, \country{USA}}}

\affil[3]{\orgdiv{Department of Mathematics}, \orgname{University of Illinois Urbana-Champaign}, \orgaddress{\street{1409 W Green St}, \city{Urbana}, \postcode{61801}, \state{Illinois}, \country{USA}}}


\abstract{
On the fiftieth anniversary of the Kuramoto model, synchronization remains a central paradigm for collective behavior in complex oscillator systems. While phase locking, frequency synchronization, and order-parameter coherence are widely used to characterize synchronization, their precise dynamical relationship has remained unclear, especially in finite-dimensional systems beyond mean-field limits. In this work, we show that, in the fully connected Kuramoto model, full phase locking, phase locking, frequency synchronization, and order-parameter synchronization are dynamically equivalent. Moreover, we demonstrate that the equivalence between phase locking and frequency synchronization is governed by a topology-independent mechanism and persists for generic symmetric coupling structures. These results provide a unified dynamical framework for synchronization and clarify the structural role of the Kuramoto order parameter.
}
\keywords{Kuramoto model, Synchronization, Coupled oscillators, Phase-locking, Frequency synchronization, Order parameter synchronization, Necessary conditions for synchronization.}
\maketitle
\tableofcontents

\section{Introduction} \label{sec 1}
Synchronization is a fundamental phenomenon in nature, observed across a wide range of disciplines from biology and chemistry to physics and engineering. 
The mathematical formulation of synchronization through the Kuramoto model has become one of the most celebrated frameworks in nonlinear dynamics. 
Originally introduced by Yoshiki Kuramoto in the 1970s~\cite{kuramoto1975self}, the model provides a minimal description of collective synchronization in populations of coupled phase oscillators. 
Over the past five decades, this simple formulation has inspired extensive research across statistical physics, dynamical systems, and complex systems theory. The remarkable longevity of the Kuramoto model stems from its ability to capture emergent coherence in a wide variety of settings, including Josephson junction arrays~\cite{wiesenfeld1998frequency}, circadian rhythms~\cite{childs2008stability}, neural synchrony~\cite{breakspear2010generative}, power grids~\cite{dorfler2013synchronization}, spin-torque oscillators~\cite{flovik2016describing}, quantum synchronization models~\cite{lohe2009non, deville2019synchronization, bronski2020matrix}, algebraic generalizations~\cite{thumler2023synchrony, hsiao2025synchronization, hsiao2023synchronization}, and clustered oscillator systems~\cite{chen2024complete}. 
Across these diverse applications, the Kuramoto model has served as a paradigmatic reference point for understanding collective dynamical behavior.

From the perspective of physics, the Kuramoto model offers a paradigmatic example of a nonequilibrium system exhibiting spontaneous synchronization. 
A central concept in this framework is the Kuramoto order parameter, first introduced in~\cite{kuramoto1975self}, which provides a macroscopic measure of collective coherence. 
Defined as
\begin{align*}
Z(t) = R(t) e^{\im\Phi(t)} = \frac{1}{N} \sum_{j=1}^N e^{\im\theta_j(t)},
\end{align*}
the order parameter captures the degree of phase coherence in the oscillator population and plays a role analogous to magnetization in Landau's theory of phase transitions~\cite{landau1937theory}. The emergence of a nonzero magnitude $R(t)=|Z(t)|$ is often interpreted as a transition from incoherence to coherence and has become a standard diagnostic for synchronization in both theoretical and applied studies. 
Similar macroscopic coherence measures have been successfully employed in other physical systems exhibiting collective behavior, such as laser arrays~\cite{nair2021using}. 
As a result, the order parameter has become a cornerstone of synchronization theory and a primary tool for identifying synchronized states in complex oscillator systems.

As synchronization theory has developed over the past decades \cite{bronski2012fully, strogatz2000kuramoto, strogatz1993coupled,hong2011kuramoto,dorfler2011critical,dorfler2013synchronization, ha2021synchronization,ha2020asymptotic}, multiple criteria have been introduced to characterize collective behavior in Kuramoto-type systems. 
At the microscopic level, synchronization is often described in terms of phase locking, referring to bounded or convergent phase differences between oscillators. 
From a dynamical perspective, frequency synchronization emphasizes the alignment of asymptotic frequencies. 
At the macroscopic level, coherence is commonly quantified through the Kuramoto order parameter, which has become a standard indicator of synchronization in large-scale and numerical studies. Despite their widespread use, these notions of synchronization are frequently treated as interchangeable in the literature. 
In practice, conclusions drawn from one criterion—such as order-parameter coherence—are often assumed to imply others, such as phase or frequency synchronization. 
However, this equivalence is rarely stated explicitly and is typically justified only under restrictive assumptions, such as strong coupling, mean-field limits, or specific network structures.

In this work, we resolve this longstanding ambiguity by establishing a unified and fully nonlinear framework for synchronization in finite-dimensional Kuramoto systems. 
We show that, for generic symmetric coupling structures, full phase locking, phase locking, and frequency synchronization are dynamically equivalent. 
Moreover, in the fully connected Kuramoto model, we further prove that order-parameter synchronization is equivalent to these phase-based synchronization notions. Our results do not rely on mean-field limits, perturbative assumptions, or specific coupling topologies. 
Instead, they reveal a topology-independent dynamical mechanism underlying synchronization equivalence in Kuramoto flows. 
In this sense, the present work completes a structural aspect of synchronization theory that has remained implicit throughout the long history of the Kuramoto model.

\subsection{Kuramoto models}
The dynamics of interest in this paper are governed by the generalized first-order Kuramoto models:
\begin{align}  \label{GTK}
d_j \dot{\theta}_j = \omega_j + \frac{1}{N} \sum_{k=1}^N \lambda_{jk} \sin(\theta_k - \theta_j), \quad \text{for all } j \in \{1, \ldots, N\},
\end{align}
where $d_j>0$  is a given positive constant. Here, $\theta_j(t) \in \mathbb{R}$ denotes the phase of the $j$-th oscillator at time $t$ and $\omega_j \in \mathbb{R}$ is its natural frequency. The interaction strength between oscillators $j$ and $k$ is encoded by the coupling coefficient $\lambda_{jk}\in\mathbb{R}$. We work on a simple undirected graph $G=(V,E)$ with $V=\{1,\dots,N\}$ and weighted coupling matrix $\Lambda=[\lambda_{jk}]$ satisfying:
\begin{enumerate}[label=(\roman*),leftmargin=*]
\item \textbf{Weighted edges:} $\lambda_{jk}\in \mathbb{R}$ and $\lambda_{jj}=0$ (no self-loops).
\item \textbf{Symmetry (undirected):} $\lambda_{jk}=\lambda_{kj}$ for all $j,k$.
\item \textbf{Connectivity:} $G$ is connected.
\end{enumerate}
Throughout this paper, without loss of generality, we may assume that the effective natural frequencies satisfy the following normalization condition:
\begin{align} \label{omega constraint}
\omega_1 + \cdots + \omega_N = 0.
\end{align}
This can be achieved by introducing a change of variables:
\begin{align*}
\theta_j \rightarrow \theta_j - \frac{\omega_1 + \cdots + \omega_N}{d_1 + \cdots + d_N} t, \quad \omega_j \rightarrow \omega_j - \frac{\omega_1 + \cdots + \omega_N}{d_1 + \cdots + d_N} d_j,
\end{align*}
for all $j \in \{1, \ldots, N\}$. This transformation corresponds to moving into a co-rotating frame with a weighted average frequency. It preserves the dynamics of the phase differences and, consequently, all forms of synchronization considered in this work. It also simplifies the analysis by removing the uniform drift associated with the collective motion. In this co-rotating frame, we immediately obtain the following observation.\\

\begin{lemma} \label{warm up lemma}
    \begin{align} \label{first eq in}
        \lim_{t\rightarrow \infty} |\dot{\theta}_j(t)|=0, ~~\mbox{for all}~~ j\in\{1,\ldots,N\}
        .
    \end{align} if and only if 
    \begin{align} \label{first eq in warm up lemma}
        \lim\limits_{t\rightarrow\infty} |\dot{\theta}_j(t)-\dot{\theta}_k(t)|=0~~\mbox{for all}~j,k\in\{1,\ldots,N\}.
    \end{align}
\end{lemma}
\begin{proof}
    If $\theta(t)$ satisfies \eqref{first eq in}, then it is obvious that \eqref{first eq in warm up lemma} holds true. On the other hand, one finds that 
    \begin{align}\label{sum dot theta}
        d_1\dot{\theta}_1(t)+\ldots+d_N\dot{\theta}_N(t)=0,~~\mbox{for all}~t>0.
    \end{align}
    We notice that
    \begin{align} \label{delta inequality}
        \sum_{k=1}^N\left|d_k(\dot{\theta}_j-\dot{\theta}_k)\right|+\left|\sum_{k=1}^N d_k \dot{\theta}_k\right|\geq \left| \sum_{k=1}^N d_k \dot{\theta}_j\right|.
    \end{align}
    Inequality \eqref{delta inequality}, combined with \eqref{first eq in warm up lemma} and \eqref{sum dot theta}, shows that \eqref{first eq in}. The proof of Lemma \ref{warm up lemma} is complete.
\end{proof}

\color{black}
Finally, we summarize the parameter space of the generalized Kuramoto models \eqref{GTK}. Due to the rotational invariance of \eqref{GTK}, the natural frequencies satisfy the constraint \eqref{omega constraint}. Therefore, only $N-1$ frequencies are independent, and we identify the frequency vector with 
\begin{align*}
    \omega:=(\omega_1,\ldots,\omega_{N-1})\in\mathbb{R}^{N-1}.
\end{align*}
Moreover, the no self-loops coupling coefficients satisfy the symmetry condition 
\begin{align*}
    \lambda_{jk}=\lambda_{kj}\quad\text{for all} \quad j,k\in\{1,\ldots,N\} \quad \text{and}\quad \lambda_{jj}=0 \quad \text{for all} \quad j\in\{1,\ldots,N\}. 
\end{align*}
Hence the independent coupling parameters can be represented as 
\begin{align*}
    \Lambda:=\left[\lambda_{jk}\right]_{1\leq j<k\leq N}\in\mathbb{R}^{N(N-1)/2}
\end{align*}
Accordingly, the parameter space of the model is 
\begin{align} \label{P} 
    P:=\mathbb{R}^{N-1}\times\mathbb{R}^{N(N-1)/2}.
\end{align}
\color{black}

\section{Main results} \begin{figure}[htbp] 
    \centering
    \begin{tikzpicture}[thick, >=latex, scale=1.05, every node/.style={font=\bfseries}]

        \draw[black, thick] (0,0.2) ellipse (3.8cm and 3.9cm);
        \node at (0,4.4) {\large\bfseries Synchronization States};

        \node[circle, draw, minimum size=1.8cm] (FPLS) at (-2.5,1.5) {FPLS};
        \node[circle, draw, minimum size=1.8cm] (PLS) at (2.5,1.5) {PLS};
        \node[circle, draw, minimum size=1.8cm] (FSS) at (0,-2.3) {FSS};

        \node at (0,0) {\footnotesize (general topology)};

        \path[<->] (FPLS) edge[bend left=15] node[above=2pt] {\footnotesize Thm \ref{main 1} } (PLS);
        \path[<->] (FPLS) edge[bend right=15] node[left=2pt] {\footnotesize Thm \ref{main 1} } (FSS);
        \path[<->] (PLS) edge[bend left=15] node[right=2pt] {\footnotesize Thm \ref{main 1} } (FSS);

        \node[circle, draw, minimum size=1.8cm, align=center] (OPSS) at (-6.5,2.5) {OPSS};

        \draw[<->] (-3.7,1.2) -- (OPSS);
        \node at (-4.5,2.2) {\footnotesize Thm \ref{OP eqiv sync}};
        \node at (-5.2,1.4) {\footnotesize (all-to-all)};

    \end{tikzpicture}
\caption{
Logical structure of the equivalence relations among various synchronization states in the generalized Kuramoto flow. Each double arrow represents a bidirectional implication established in the corresponding theorem, under the assumptions described therein. The outer node \textbf{OPSS} (see, Definition~\ref{def 3}) corresponds to the order parameter synchronization state, whose relationship to the core synchronization states is proved under all-to-all topology (Theorems~\ref{OP eqiv sync}). \textbf{FPLS}, \textbf{PLS}, and \textbf{FSS} (see, Definition~\ref{def 2}) refer to the full phase-locked, phase-locked, and frequency synchronization states.
} \label{logical fig of main thm}
\end{figure}

We develop a rigorous, fully nonlinear framework that establishes the equivalence of four classical notions of synchronization—full phase-locking, phase-locking, frequency synchronization, order parameter synchronization—in generalized Kuramoto flows (see Fig.~\ref{logical fig of main thm}). The proofs are based on a compactness argument for the phase difference system, exploiting the periodic vector field structure and a finite-root condition to capture convergence mechanisms intrinsic to the system’s geometry. Notably, we rigorously justify the use of the Kuramoto order parameter as a valid indicator of synchronization, resolving a long-standing ambiguity in the field. In this paper, synchronization appears in several distinct dynamical states.
A full phase-locked state ($\mathrm{FPLS}$) refers to a regime in which each phase converges to a steady limiting configuration.
A phase-locked state ($\mathrm{PLS}$) denotes a state where all phase differences remain uniformly bounded for all time.
A frequency synchronization state ($\mathrm{FSS}$) corresponds to asymptotic alignment of instantaneous frequencies.
Finally, an order parameter synchronization state ($\mathrm{OPSS}$) describes a coherent macroscopic state whose limiting amplitude is dynamically sufficient to sustain phase locking. The defitions of ($\mathrm{FPLS}$), ($\mathrm{PLS}$), ($\mathrm{FSS}$), and ($\mathrm{OPSS}$) will be defined as follows. 

\subsection{Definitions of synchronization states}\label{def}
In this subsection, we provide a review of the concepts of phase synchronization, full phase-locked state, phase-locked state, and frequency synchronization state. Furthermore, we introduce the definitions of order parameter synchronization state.
\\
\begin{definition} \label{def 1}
A solution $\theta(t)$ is called phase synchronization state if for all $j,k\in\{1,\ldots,N\}$,
    \begin{align*}
        \lim_{t\rightarrow \infty} \left(\theta_j(t)-\theta_k(t)\right)=0. 
    \end{align*}
\end{definition}

\begin{definition} [Synchronization state]\label{def 2}

\begin{enumerate}
\item A solution $\theta(t)$ is called full phase-locked state $\mathrm{(FPLS)}$ if for all $j,k\in\{1,\ldots,N\}$,
    \begin{align*}
        \lim_{t\rightarrow\infty} (\theta_j(t)-\theta_k(t))=\theta^*_{jk},
    \end{align*}
where $\theta^*_{jk}$ are constants for all $j,k\in\{1,\ldots,N\}$. This is equivalent to the statement that $\theta(t)$ converges to a steady state $\theta^*$ satisfying the stationary equation:
\begin{align*}
    \omega_j+\frac{1}{N}\sum_{k=1}^N \lambda_{jk} \sin(\theta^*_{kj})=0,
\end{align*}
for all $j\in\{1,\ldots,N\}$.
\item  A solution $\theta(t)$ is called phase-locked state $\mathrm{(PLS)}$ if for all $j,k\in\{1,\ldots,N\}$, there exists a positive constant $L$ such that for all $t>0$,
    \begin{align*}
       |\theta_j(t)-\theta_k(t)|\leq L
       .
    \end{align*}
\item A solution $\theta(t)$ is called frequency synchronization state $\mathrm{(FSS)}$ if 
    \begin{align} \label{F S for G K}
        \lim_{t\rightarrow \infty} |\dot{\theta}_j(t)|=0, ~~\mbox{for all}~~ j\in\{1,\ldots,N\}
        .
    \end{align}
\end{enumerate} 
\end{definition}

\begin{definition} [OP synchronization] \label{def 3}
   Define order parameter for $\theta(t)$ as
   \begin{align} 
       Z(t):= R(t) e^{\im\Phi(t)}=\frac{1}{N}\sum_{j=1}^N e^{\im\theta_j(t)},
   \end{align}
   where $R(t)$ and $\Phi(t)$ are two real functions. In the fully connected homogeneous coupling case $\lambda_{jk}=\lambda\in\mathbb{R}\setminus\{0\}$, a solution $\theta(t)$ is called OP synchronization state $(\mathrm{OPSS})$ if there exist $Z^*\in\mathbb{C}$ such that
   \begin{align} \label{Phi limit}
       \lim_{t\rightarrow \infty} Z(t)=Z^*\quad\text{and}\quad \left|Z^*\right| \geq \frac{|\omega_j|}{|\lambda|}\quad\text{for all}\quad j\in\{1,\ldots,N\}.
   \end{align}
\end{definition}

We pause to remark that a necessary condition for the existence of a phase synchronization state is that all natural frequencies are identical. In other words, one must assume that  
\begin{align*}
    \omega_j -\omega_k \equiv 0, \quad \text{for all } j,k \in \{1, \dots, N\}.
\end{align*}  
Moreover, it is evident that if $\theta(t)$ is a full phase-locked state (see 1. in Definition~\ref{def 2}), then it is also a phase-locked state (see 2. in Definition~\ref{def 2}). Additionally, $R: \mathbb{R}^+ \to [0,1]$ is a continuous function. We also assume that $\Phi(0) \in [0,2\pi)$ and that $\Phi: \mathbb{R}^+ \to \mathbb{R}$ is a continuous function. Later, for all-to-all coupling, we will see that \eqref{Phi limit} is both necessary and sufficient (Theorem~\ref{OP eqiv sync}); see also Appendix~\ref{5_1}.

We define generic coefficients for the generalized first-order Kuramoto models \eqref{GTK}.
\begin{definition}[Generic coefficients] \label{GC}
Recall \eqref{P}. Let 
\begin{align}
    (\omega,\Lambda)\in P:=\mathbb{R}^{N-1}\times\mathbb{R}^{N(N-1)/2}.
\end{align}
denote the natural frequencies and coupling coefficients. We say the coefficients are generic if they belong to an open dense subset of the parameter space. 
\end{definition}

\subsection{Equivalence of synchronization states}
\begin{theorem}[Equivalence of synchronization states in the all-to-all Kuramoto model]\label{OP eqiv sync}
Consider the finite-dimensional Kuramoto model with all-to-all (fully connected $\lambda \in \mathbb{R}\setminus\{0\}$) coupling, that is, $\lambda_{jk} = \lambda \neq 0$ for all $j\neq k$:
\begin{align} \label{d classical Kuramoto}
d_j \dot{\theta}_j = \omega_j + \frac{\lambda}{N} \sum_{k=1}^N \sin(\theta_k - \theta_j), \quad \text{for all} \quad j \in \{1, \dots, N\}.
\end{align}
Then the following synchronization states are dynamically equivalent:
\begin{align*}
\mathrm{(FPLS)} \;\Longleftrightarrow\; \mathrm{(PLS)} \;\Longleftrightarrow\; 
\mathrm{(FSS)} \;\Longleftrightarrow\; \mathrm{(OPSS)}.
\end{align*}
In other words, the occurrence of any one of these synchronization states implies the others. In particular, setting $d_j=1$ for all $j$ shows that the classical Kuramoto model also has the above properties.
\end{theorem}

\begin{theorem}[Equivalence of synchronization states for generic coupling topologies] \label{main 1}
Consider the finite-dimensional Kuramoto model with arbitrary symmetric coupling topology:
\begin{align} \label{G Kuramoto}
d_j \dot{\theta}_j = \omega_j + \frac{1}{N} \sum_{k=1}^N \lambda_{jk} \sin(\theta_k - \theta_j), \quad \text{for all } j \in \{1, \ldots, N\}.
\end{align} 

For generic coefficients $(\omega,\Lambda)\in P$ (see, Definition~\ref{GC}), the following synchronization states
are dynamically equivalent:
\begin{align*}
\mathrm{(FPLS)} \;\Longleftrightarrow\; \mathrm{(PLS)} \;\Longleftrightarrow\; \mathrm{(FSS)}.
\end{align*}
That is, for generic $(\omega,\Lambda)\in P$, full phase locking, phase locking, and frequency synchronization states
describe the same asymptotic dynamical behavior of the Kuramoto flow, independently of the underlying network topology.
\end{theorem}

We end this subsection by explaining why a frequency synchronization state (FSS) necessarily implies a phase-locked state (PLS) in the all-to-all Kuramoto model.\\

\begin{lemma}\label{lemma classical K G}
Consider the classical Kuramoto model with $d_j=1$ and $\lambda_{jk}=\lambda\neq 0$:
\begin{align}\label{classical Kuramoto}
\dot{\theta}_j
= \omega_j + \frac{\lambda}{N}\sum_{k=1}^N \sin(\theta_k - \theta_j),
\quad j\in\{1,\ldots,N\}.
\end{align}
Let $\theta(t) = (\theta_1(t), \dots, \theta_N(t))$ be a solution of \eqref{classical Kuramoto}. 
If
\begin{align*}
\lim_{t \to \infty} \big( \dot{\theta}_i(t) - \dot{\theta}_j(t) \big) = 0
\quad \text{for all } i,j\in\{1,\ldots,N\},
\end{align*}
then
\begin{align*}
\sup_{t \ge 0} \big| \theta_i(t) - \theta_j(t) \big| < \infty
\quad \text{for all } i,j\in\{1,\ldots,N\}.
\end{align*}
\end{lemma}

\begin{proof}
The case where $(\omega_1,\ldots,\omega_N)=(0,\ldots,0)$ follows directly from \cite{watanabe1994constants}. Next, consider the case when $(\omega_1,\ldots,\omega_N)\neq (0,\ldots,0)$. We argue by contradiction. Suppose that $\theta(t)$ is a frequency synchronization state but not a phase-locked state. Then, there exists at least one pair $(\theta_j(t),\theta_k(t))$, for some $j,k\in\{1,\ldots,N\}$, such that
\[
\limsup_{t\to\infty} (\theta_j(t)-\theta_k(t)) = +\infty.
\]
Without loss of generality, assume that $\omega_j\neq \omega_k$ (if $\omega_j=\omega_k$, one can choose an index $l$ with $\omega_l\neq\omega_j$, and the unboundedness of some difference follows from either $\theta_j(t)-\theta_l(t)$ or $\theta_l(t)-\theta_k(t)$). For notational simplicity, we set $j=1$ and $k=N$.

Next, we choose $\theta_N(t)$ as the reference coordinate and define
\[
\psi_j(t)=\theta_j(t)-\theta_N(t), \quad j=1,\ldots,N,\quad \text{with } \psi_N(t)\equiv 0.
\]
Then \eqref{classical Kuramoto} can be rewritten as
\begin{equation}  \label{theta N reference}
\begin{aligned} 
\dot{\theta}_1-\dot{\theta}_N &= \omega_1-\omega_N + \frac{\lambda}{N}\sum_{l=1}^N \sin(\theta_l-\theta_1)-\frac{\lambda}{N}\sum_{l=1}^N \sin(\theta_l-\theta_N),\\[1mm]
&\;\;\vdots\\[1mm]
\dot{\theta}_{N-1}-\dot{\theta}_N &= \omega_{N-1}-\omega_N + \frac{\lambda}{N}\sum_{l=1}^N \sin(\theta_l-\theta_{N-1})-\frac{\lambda}{N}\sum_{l=1}^N \sin(\theta_l-\theta_N).
\end{aligned}    
\end{equation}
Expressed in terms of the new variables $\psi_j$, the system becomes
\begin{equation}  \label{psi N reference}
\begin{aligned} 
\dot{\psi}_1 &= \omega_1-\omega_N + \frac{\lambda}{N}\sum_{l=1}^{N} \sin(\psi_l-\psi_1)-\frac{\lambda}{N}\sum_{l=1}^{N-1} \sin(\psi_l),\\[1mm]
&\;\;\vdots\\[1mm]
\dot{\psi}_{N-1} &= \omega_{N-1}-\omega_N + \frac{\lambda}{N}\sum_{l=1}^{N} \sin(\psi_l-\psi_{N-1})-\frac{\lambda}{N}\sum_{l=1}^{N-1} \sin(\psi_l).
\end{aligned}    
\end{equation}
Since $\theta(t)$ is a frequency synchronization state, we have $\dot{\theta}_j(t)\to 0$ as $t\to\infty$ for all $j$, and hence $\dot{\psi}_j(t)\to 0$. However, by the construction of $\psi_1$, the assumption that $\theta(t)$ is not phase-locked implies that $\psi_1(t)$ is unbounded (in fact, $\psi_1(t)$ tends to infinity). In particular, there exists a time $T>0$ such that
\[
|\dot{\psi}_1(t)|<\frac{|\omega_1-\omega_N|}{2} \quad \text{for all } t>T.
\]
On the other hand, due to the unboundedness of $\psi_1(t)$, there exists a sequence $\{t_n\}$ with $t_n\to\infty$ such that
\[
\psi_1(t_n)=2\pi n \quad \text{for all } n\in\mathbb{N}.
\]
Evaluating \eqref{psi N reference} at these times leads to a contradiction with the bound on $|\dot{\psi}_1(t)|$. Thus, $\theta(t)$ must be phase-locked.

This completes the proof of Lemma~\ref{lemma classical K G}.
\end{proof}

\section{A unified framework for Kuramoto flows} \label{sec 4}

In this section, we provide rigorous proofs of our main results---Theorem~\ref{OP eqiv sync} and Theorem~\ref{main 1}---thereby establishing the equivalence of various synchronization definitions in the Kuramoto model. 

\subsection{Energy functional}

We first introduce the energy functional for the generalized Kuramoto model. For the classical Kuramoto model, the corresponding energy can be found in \cite{van1993lyapunov}.\\

\begin{lemma} \label{energy argument}
    Let $\theta(t)$ be a solution of \eqref{G Kuramoto}. If $\theta(t)$ is $\mathrm{(PLS)}$, then it is also  $\mathrm{(FSS)}$.
\end{lemma}

\begin{proof}
    Multiplying $\dot{\theta}_j$ in \eqref{G Kuramoto}, summing over $j=1,\ldots,N$ and integrating the term over $(0,t)$, we obtain
    \begin{equation} \label{energy function}
    \begin{aligned}
        \int_0^t\sum_{j=1}^{N}d_j\dot{\theta}^2_j(s) \mathrm{d}s=&\sum_{j=1}^N \omega_j\left(\theta_j(t)-\theta_j(0)\right)\\
        &+\int_0^t \frac{1}{N} \sum_{j=1}^N\sum_{k=1}^N\lambda_{jk}\sin(\theta_k(s)-\theta_j(s))\dot{\theta}_j(s) \mathrm{d}s.
    \end{aligned}
    \end{equation}
    Next, we observe that the term
    \begin{align} \label{Lambda}
        \sum_{j=1}^N \omega_j\left(\theta_j(t)-\theta_j(0)\right)=\sum_{j=2}^N \omega_j(\theta_j(t)-\theta_1(t))-\sum_{j=1}^N \omega_j\theta_j(0), 
    \end{align}
    remains bounded uniformly in $t$. Moreover, using the symmetry $\lambda_{jk}=\lambda_{kj}$, one obtain
    \begin{equation} \label{H}
        \begin{aligned}
            \int_0^t \sum_{j=1}^N\sum_{k=1}^N\lambda_{jk}\sin(\theta_k(s)-\theta_j(s))\dot{\theta}_j(s) \mathrm{d}s =\left.\sum_{j<k} \lambda_{jk}\cos(\theta_k(s)-\theta_j(s))\right|_0^t,
        \end{aligned}
    \end{equation}
    which is also uniformly bounded in $t$. Hence, from
  \eqref{energy function}, combined with \eqref{Lambda} and \eqref{H}, we deduce that
    \begin{align} \label{energy is bdd}0\leq\int_0^t\sum_{j=1}^{N}d_j\dot{\theta}^2_j(s) \mathrm{d}s\leq \int_0^\infty\sum_{j=1}^{N}d_j\dot{\theta}^2_j(s) \mathrm{d}s \leq C,
    \end{align}
    for some constant $C>0$ independent of $t$.
    
 Next, we observe that 
\begin{equation} \label{theta dot is bdd 1}
|\dot{\theta}_j(t)|\leq \left|\frac{\omega_j}{d_j}\right|+\frac{1}{N d_j}\sum_{k=1}^N|\lambda_{jk}|, \quad \text{for all } j.
\end{equation}
It follows from \eqref{theta dot is bdd 1} that $\dot{\theta}_j(t)$ is uniformly bounded and, by differentiating \eqref{G Kuramoto}, so is $\ddot{\theta}_j(t)$. Consequently, the function $\sum_{j=1}^{N}d_j\dot{\theta}_j^2(t)$ is uniformly continuous on $t>0$.

By \eqref{energy is bdd} and the uniform continuity of $\sum_{j=1}^{N}d_j\dot{\theta}_j^2(t)$, we obtain
\[
\lim_{t\to\infty}\sum_{j=1}^{N}d_j\dot{\theta}_j^2(t)=0.
\]
Thus, we conclude that $\dot{\theta}_j(t)\to 0$ for all $j$, meaning that $\theta(t)$ is a frequency synchronization state. This completes the proof.
\end{proof}

\subsection{Generic finite-root property via parametric transversality}

This section is concerned with the application of the Parametric Transversality Theorem to the finite-root property of the generalized Kuramoto models. The key theorem (Theorem~\ref{PTT}) is due to \cite[Sard]{sard1942measure}. A complete discussion of the theorem, its proof and generalizations can be found in the textbook \cite[Theorem~6.35]{lee2003smooth}. To the best of the authors' knowledge, the finite-root property of the Kuramoto equilibrium equations has not been rigorously established in the existing literature. 
Although the finiteness of equilibria has been mentioned conceptually in earlier works (see, e.g., \cite{baillieul1982geometric}), a complete analytical proof appears to be absent. 
The present work provides a rigorous justification of this property using a parametric transversality argument.\\

\begin{theorem} [Parametric Transversality Theorem] \label{PTT}
Let $\mathsf{N},\mathsf{M}$ be smooth manifolds (over $\mathbb{R}$), and let $\mathsf{X}\subset \mathsf{M}$ be an embedded submanifold. 
Let $\{\mathsf{F}_p : \mathsf{N} \to \mathsf{M}\}_{p\in \mathsf{P}}$ be a smooth family of maps parametrized by a smooth manifold $\mathsf{P}$, 
and define
\begin{align*}
\mathsf{F}:\mathsf{N}\times \mathsf{P} \to \mathsf{M}, \qquad \mathsf{F}(x,p)=\mathsf{F}_p(x).
\end{align*}
If $\mathsf{F}$ is transverse to $\mathsf{X}$, that is, for every $(x,p)\in \mathsf{N}\times \mathsf{P}$ with 
$\mathsf{F}(x,p)\in \mathsf{X}$ we have
\begin{align*}
D_{(x,p)}\mathsf{F}\left(T_{(x,p)}(\mathsf{N}\times \mathsf{P})\right)
+ T_{\mathsf{F}(x,p)}\mathsf{X}
=
T_{\mathsf{F}(x,p)}\mathsf{M},
\end{align*}
then for almost every $p\in \mathsf{P}$, the map $\mathsf{F}_p:\mathsf{N}\to \mathsf{M}$ is transverse to $\mathsf{X}$.
\end{theorem}

Let us choose 
\begin{align*}
    \mathsf{N}=\mathbb{R}^{N-1},\quad \mathsf{P}=\mathbb{R}^{N-1+N(N-1)/2},\quad \mathsf{M}=\mathbb{R}^{N-1},\quad \mathsf{X}=\{0\},
\end{align*}
and denote
\begin{align*}
    \psi:=(\psi_1,\ldots,\psi_{N-1})\in \mathsf{N},\quad (\omega,\Lambda)\in \mathsf{P}.
\end{align*}
Let $\{\mathsf{F}(\cdot;\omega,\Lambda):\mathsf{N}\to \mathsf{M}\}_{(\omega,\Lambda)\in \mathsf{P}}$ be a smooth family of maps parametrized by a smooth manifold $\mathsf{P}$, and define
\begin{align}
    \mathsf{F}:\mathsf{N}\times \mathsf{P}\to \mathsf{M}, \quad \mathsf{F}(\psi,\omega,\Lambda)=\mathsf{F}(\psi;\omega,\Lambda),
\end{align}
where, for $j\in\{1,\ldots,N-1\}$,
\begin{align} \label{Fj}
    \mathsf{F}_j(\psi,\omega,\Lambda):=\omega_j - \omega_N
+ \frac{1}{N}\sum_{l=1}^{N}\lambda_{jl}\sin(\psi_l-\psi_j)
- \frac{1}{N}\sum_{l=1}^{N-1}\lambda_{Nl}\sin(\psi_l),
\end{align}
and
\begin{align}
    \psi_N:=0, \quad \omega_N:=-\omega_1-\omega_2-\ldots-\omega_{N-1}.
\end{align}

Let us show $\mathsf{F}$ is transverse to $X=\{0\}$. A straightforward calculation reveals that
\begin{align}
    D_\omega\mathsf{F}=I_{N-1}+J_{N-1}=\begin{pmatrix}
1 & 0 & 0 & \cdots & 0 \\
0 & 1 & 0 & \cdots & 0 \\
0 & 0 & 1 & \cdots & 0 \\
\vdots & \vdots & \vdots & \ddots & \vdots \\
0 & 0 & 0 & \cdots & 1
\end{pmatrix}+\begin{pmatrix}
1 & 1 & 1 & \cdots & 1 \\
1 & 1 & 1 & \cdots & 1 \\
1 & 1 & 1 & \cdots & 1 \\
\vdots & \vdots & \vdots & \ddots & \vdots \\
1 & 1 & 1 & \cdots & 1
\end{pmatrix}.
\end{align}
Since $J_{N-1}$ has rank one, $I_{N-1}+J_{N-1}$ is invertible, this implies
\begin{align}
   \mathit{rank}\, D_{(\psi,\omega,\Lambda)}F
=\mathit{rank}\,
\left[
\begin{array}{c|c|c}
D_\psi \mathsf{F} & D_\omega \mathsf{F}& D_\Lambda \mathsf{F}
\end{array}
\right]=N-1,
\end{align}
and, hence,
\begin{align*}
    D_{(\psi,\omega,\Lambda)}\mathsf{F}\left(T_{(\psi,\omega,\Lambda)}(\mathsf{N}\times \mathsf{P})\right)
+ T_{0}\{0\}
= \mathbb{R}^{N-1}+\{0\}=\mathbb{R}^{N-1}=
T_{0}\mathsf{M}.
\end{align*}
By Theorem~\ref{PTT}, for almost every $(\omega,\Lambda)\in \mathsf{P}$, the map $\mathsf{F}_{\omega,\Lambda}(\cdot):=\mathsf{F}(\cdot;\omega,\Lambda):\mathsf{N}\to \mathsf{M}$ is transverse to $\{0\}$. This transversality condition for the map $\mathsf{F}_{\omega,\Lambda}$, by definition, means that for each $\psi\in\mathsf{N}$ such that $ \mathsf{F}_{\omega,\Lambda}(\psi)\in\{0\}$, we have
\begin{align}
    D_{\psi} \mathsf{F}_{\omega,\Lambda}\left(T_{\psi}\mathsf{N}\right)=T_0\mathsf{M}=\mathbb{R}^{N-1},
\end{align}
which implies that $D_{\psi} \mathsf{F}_{\omega,\Lambda}$ is surjective, hence invertible. By the Inverse Function Theorem for Manifolds~\cite[Theorem~4.5]{lee2003smooth}, around each $\psi\in\left(\mathsf{F}_{\omega,\Lambda}\right)^{-1}(\{0\})\subset\mathsf{N}$ there exists a connected neighborhood $\mathsf{U}\subset\mathsf{N}$, and a corresponding connected neighborhood $\mathsf{V}\subset\mathsf{M}$ around $\mathsf{F}_{\omega,\Lambda}(\psi)=0$ such that $\mathsf{F}_{\omega,\Lambda}\vert_{\mathsf{U}}:\mathsf{U}\to\mathsf{V}$ is a diffeomorphsim, which is invertible. This invertible function is injective, so locally $\psi$ is the only zero of $\mathsf{F}_{\omega,\Lambda}$. That is, each zero of $\mathsf{F}_{\omega,\Lambda}$ is \emph{isolated}. Recall that this holds for almost every parameter $(\omega,\Lambda)$, i.e., the parameters that this does not hold has Lebesgue measure zero. 

Fix a parameter combination $(\omega,\Lambda)$ such that each zero of $F_{\omega,\Lambda}$ is isolated, and restrict our attention to a length-$2\pi$ box in $\mathbb{R}^{N-1}$. We claim that the zeros of $F_{\omega,\Lambda}$ can only be finitely many in this box, by contradiction. If there were infinite zeros in this compact box, then there exists a sequence $(\psi^{(j)})_{j\in\mathbb{N}}$ of zeros of $F_{\omega,\Lambda}$ in the box. The box is compact, hence sequentially compact, so after potentially passing to a subsequence, $(\psi^{(j)})_{j\in\mathbb{N}}$ converges to some $\psi^\star$ in the box. By continuity of $F_{\omega,\Lambda}$, we know that $\psi^\star$ is also a zero of $F_{\omega,\Lambda}$. This is a contradiction, since $\psi^{(j)}\to\psi^\star$ as $j\to\infty$ but by assumption, each zero of $F_{\omega,\Lambda}$ is isolated. We have shown that in this box, the zeros of $F_{\omega,\Lambda}$ are finitely many. Conversely, if there are finitely many zeros of $F_{\omega,\Lambda}$ in the box, each of them is easily seen to be isolated. Hence, in the compact box, $F_{\omega,\Lambda}$ having isolated zeros is equivalent to it having finitely many zeros.

Now we are ready to present the finite-root property of the generalized Kuramoto models.\\

\begin{proposition}\label{finite root proposition}
For generic coefficients $(\omega,\Lambda)\in 
\mathsf{P}=\mathbb{R}^{N-1}\times \mathbb{R}^{N(N-1)/2}$,
the system
\begin{align*}
\mathsf{F}_{\omega,\Lambda}(\psi_1,\ldots,\psi_{N-1})=0
\end{align*}
has only finitely many roots in the fundamental domain $U=[0,2\pi]^{N-1}$.
Consequently, for every $l\in\mathbb{Z}^{N-1}$, the system has only finitely many roots in the translated box
\begin{align*}
U_l:=U+2\pi l,
\end{align*}
and the number of roots in $U_l$ is the same as that in $U$.
\end{proposition}
\begin{proof}

Define
\[
\mathcal{T}
:=
\left\{
(\omega,\Lambda)\in P :
\mathsf{F}_{\omega,\Lambda}\,\text{is transverse to}\, \{0\}
\text{ on } U
\right\}.
\]

By the Theorem~\ref{PTT}, the complement
$P\setminus\mathcal{T}$ has Lebesgue measure zero. In particular,
$\mathcal{T}$ is dense in $P$. We next prove that $\mathcal{T}$ is open. Let $(\omega_0,\Lambda_0)\in\mathcal{T}$.

\textbf{Case 1.} Suppose
\[
\mathsf{F}_{\omega_0,\Lambda_0}^{-1}(0)\cap U = \varnothing.
\]

Since $U$ is compact and $\mathsf{F}_{\omega_0,\Lambda_0}$ is continuous,
the function $\psi\mapsto |\mathsf{F}_{\omega_0,\Lambda_0}(\psi)|$ attains
a positive minimum on $U$. Hence there exists $c>0$ such that
\[
\min_{\psi\in U} |\mathsf{F}_{\omega_0,\Lambda_0}(\psi)| = c > 0.
\]

By continuity of $\mathsf{F}(\psi,\omega,\Lambda)$ with respect to
$(\psi,\omega,\Lambda)$, for all $(\omega,\Lambda)$ sufficiently
close to $(\omega_0,\Lambda_0)$ we have
\[
\sup_{\psi\in U}
|\mathsf{F}_{\omega,\Lambda}(\psi)-\mathsf{F}_{\omega_0,\Lambda_0}(\psi)|
< \frac{c}{2}.
\]
Therefore for every $\psi\in U$,
\[
|\mathsf{F}_{\omega,\Lambda}(\psi)|
\ge
|\mathsf{F}_{\omega_0,\Lambda_0}(\psi)|-\frac{c}{2}
\ge
\frac{c}{2}
>0.
\]

Hence $\mathsf{F}_{\omega,\Lambda}$ has no zero in $U$.
Thus $\mathsf{F}_{\omega,\Lambda}$ is vacuously transverse to $\{0\}$ on $U$,
so $(\omega,\Lambda)\in\mathcal{T}$.

\textbf{Case 2.} Suppose
\[
\mathsf{F}_{\omega_0,\Lambda_0}^{-1}(0)\cap U
=
\{\psi^1,\dots,\psi^m\}.
\]

Since $(\omega_0,\Lambda_0)\in\mathcal{T}$,
each zero is nondegenerate, i.e.
\[
\det D_\psi \mathsf{F}_{\omega_0,\Lambda_0}(\psi^j)\neq0,
\qquad j=1,\dots,m.
\]

By the implicit function theorem, for each $j$
there exist an open neighborhood $W_j\subset U$ of $\psi^j$, an open neighborhood $\mathcal{O}_j\subset P$ of $(\omega_0,\Lambda_0)$, and a smooth map $\psi^j(\omega,\Lambda)$, such that for every $(\omega,\Lambda)\in\mathcal{O}_j$ the equation
\[
\mathsf{F}_{\omega,\Lambda}(\psi)=0
\]
has exactly one solution in $W_j$, namely
\[
\psi=\psi^j(\omega,\Lambda),
\]
and this solution satisfies
\[
\det D_\psi \mathsf{F}_{\omega,\Lambda}(\psi^j(\omega,\Lambda))\neq0.
\]
Shrinking the neighborhoods $W_j$ if necessary, we may assume
their closures $\overline{W_j}$ are pairwise disjoint. Define
\[
K := U \setminus \bigcup_{j=1}^m W_j.
\]

Then $K$ is compact and contains no zero of
$\mathsf{F}_{\omega_0,\Lambda_0}$. Hence
\[
c :=
\min_{\psi\in K}
|\mathsf{F}_{\omega_0,\Lambda_0}(\psi)|
>0.
\]
By continuity of $\mathsf{F}$,
there exists a neighborhood $\mathcal{O}_K$ of
$(\omega_0,\Lambda_0)$ such that
for every $(\omega,\Lambda)\in\mathcal{O}_K$,
\[
\sup_{\psi\in K}
|\mathsf{F}_{\omega,\Lambda}(\psi)-\mathsf{F}_{\omega_0,\Lambda_0}(\psi)|
<
\frac{c}{2}.
\]
Consequently, for all $\psi\in K$,
\[
|\mathsf{F}_{\omega,\Lambda}(\psi)|
\ge
|\mathsf{F}_{\omega_0,\Lambda_0}(\psi)|-\frac{c}{2}
\ge
\frac{c}{2}
>0.
\]
Thus $\mathsf{F}_{\omega,\Lambda}$ has no zero in $K$. Therefore every zero of $\mathsf{F}_{\omega,\Lambda}$ in $U$
must lie in one of the sets $W_j$.
Since each $W_j$ contains exactly one zero,
all zeros remain nondegenerate. Hence $\mathsf{F}_{\omega,\Lambda}$ is transverse to $\{0\}$ on $U$,
and therefore $(\omega,\Lambda)\in\mathcal{T}$.
This proves that $\mathcal{T}$ is open. Finally, fix $(\omega,\Lambda)\in\mathcal{T}$.
Every zero of $\mathsf{F}_{\omega,\Lambda}$ in $U$ is nondegenerate,
hence isolated. Since $U$ is compact, the set $\mathsf{F}_{\omega,\Lambda}^{-1}(0)\cap U$ must be finite.

The periodicity
\[
\mathsf{F}_{\omega,\Lambda}(\psi+2\pi l)
=
\mathsf{F}_{\omega,\Lambda}(\psi),
\qquad l\in\mathbb Z^{N-1},
\]
implies that translation by $2\pi l$ gives a bijection
between roots in $U$ and roots in $U_l$.
Hence each translated box $U_l$ contains finitely many
roots, and the number of roots is the same as that in $U$.
\end{proof}

Therefore, we are now in a position to prove Theorem~\ref{main 1}.
By combining Theorem~\ref{PTT} with Proposition~\ref{finite root proposition}, we ensure that the finite-root property holds for parameters $(\omega,\Lambda)$ in the generic sense (see, Definition~\ref{GC}).
Consequently, the set of parameters in generalized Kuramoto models for which the equivalence of synchronization notions may fail is of Lebesgue measure zero.

\color{black}
\subsection{Proof of Theorem \ref{main 1}}
\begin{proof}
$\mathrm{(PLS)}$ implies $\mathrm{(FSS)}$: This result follows directly from Lemma \ref{energy argument}.

$\mathrm{(FSS)}$ implies $\mathrm{(PLS)}$: We proceed again by contradiction. Suppose otherwise that $\theta(t)$ is not a phase-locked state. There exists at least one pair $(\theta_l(t),\theta_k(t))$, for some $l,k\in\{1,\ldots,N\}$ such that 
 \begin{align} \label{psi_1 is not bdd}
     \limsup\limits_{t\rightarrow \infty} (\theta_l(t)-\theta_k(t))=+\infty.
 \end{align}
 Without loss of generality, we let $l=1$ and $k=N$. Let us rewrite the system equations \eqref{G Kuramoto} as
\begin{equation}  \label{psi N reference G K}
\begin{aligned} 
    d_j\dot{\theta}_j-d_N\dot{\theta}_N=\mathsf{F}_j(\psi_1,\ldots,\psi_{N-1};\omega,\Lambda)
\end{aligned}    
\end{equation}
where, for $j\in\{1,\ldots,N-1\}$,
\begin{align*}
\psi_j(t)=\theta_j(t)-\theta_N(t),    
\end{align*}
$\psi_N(t)\equiv 0$, and 
\begin{align*}
\mathsf{F}_j(\psi;\omega,\Lambda)=\omega_j-\omega_N + \frac{1}{N} \sum_{l=1}^{N} \lambda_{jl}\sin(\psi_l-\psi_j)-\frac{1}{N}\sum_{l=1}^{N-1} \lambda_{Nl}\sin(\psi_l).
\end{align*}
It is clear that $f_j$ is periodic and satisfies 
\begin{align} \label{periodic f}
\mathsf{F}_j(\psi_1,\ldots,\psi_k,\ldots,\psi_{N-1})=\mathsf{F}_j(\psi_1,\ldots,\psi_k+2\pi,\ldots,\psi_{N-1}),
\end{align}
for all $j,k\in\{1,\ldots,N-1\}$.
Partition $\mathbb{R}^{N-1}$ into a grid of boxes $\{U_n\}_{n=1}^\infty$, each of which is a closed hypercube with side length $2\pi$. By Proposition~\ref{finite root proposition}, we observe that the system $\mathsf{F}_j(\psi;\omega,\Lambda)=0,~\forall j\in\{1,\ldots,N-1\}$ has finite roots in each box. Using these roots $\{r_n^1,r_n^2,\ldots,r_n^m\}\subset U_n$ as centers, construct open balls $\{B_{n1}^\epsilon,B^\epsilon_{n2},\ldots,B^\epsilon_{nm}\}\subset\mathbb{R}^{N-1}$ such that the balls are disjoint. There exists a $\delta>0$ such that $|\mathsf{F}_1(\psi;\omega,\Lambda)|>\delta$ on the closed and bounded set $U_n\setminus\cup_{j=1}^m \{B^\epsilon_{nj}\}$. Due to periodicity, we can repeat the exact same process to obtain the same roots, open balls and have $|\mathsf{F}_1(\psi;\omega,\Lambda)|>\delta$ on the closed and bounded sets $U_n\setminus\cup_{j=1}^m \{B^\epsilon_{nj}\}$, for all $n=1,\ldots$. 

On the other hand, the trajectory $(\psi_1(t),\ldots,\psi_{N-1}(t))$ lies in $ \mathbb{R}^{N-1}$ and, due to \eqref{psi_1 is not bdd}, its first component is not bounded. This implies that the trajectory passes through infinitely many distinct boxes, and $|\mathsf{F}_1|>\delta$ occurs infinitely often. In other words, for any natural number $\tilde{n}\in\mathbb{N}$, there always exists a time $\tilde{n}$ such that $|\mathsf{F}_1(\psi(t);\omega,\Lambda)|>\delta$. This contradicts our assumption that $\theta(t)$ is a frequency synchronization state, since the left-hand side of \eqref{psi N reference G K} tends to zero as $t$ tends to infinity. 

To sum up, $\theta(t)$ must be a phase-locked state. 

$\mathrm{(PLS)}$ implies $\mathrm{(FPLS)}$: We follow previous argument. Given sufficiently small $\epsilon>0$, we construct disjoint open balls $B_{nl}^\epsilon$, centered at $r_n^l$ for all $n\in\mathbb{N}$ and $l\in\{1,\ldots,m\}$. From the previous proof, we know that $\theta(t)$ is a phase-locked state or, equivalently, a frequency synchronization state. Following from similar argument, we know $|\mathsf{F}_j(\psi;\omega,\Lambda)|$ is bounded below by a positive constant, for all $j\in\{1,\ldots,N-1\}$ and for all $(\psi_1,\ldots,\psi_{N-1})\in \mathbb{R}^{N-1}\setminus \cup_{n=1}^{\infty}\cup_{l=1}^{m} \{B^\epsilon_{nl}\}$. Therefore, there exists a sufficiently large $T>0$, such that the trajectory lies in $B^\epsilon_{nl}$ for some $n\in\mathbb{N}$ and for some $l\in\{1,\ldots,N-1\}$ when the time is sufficiently long; in other words, $(\psi_1(t),\ldots,\psi_{N-1}(t))\in B^\epsilon_{nl}$, for all $t>T$.

Using a similar approach, we can find a smaller $\epsilon_1$ and demonstrate that the trajectory will eventually fall into a smaller $\epsilon_1$-ball. Continuing this argument indefinitely, we know that the trajectory will ultimately approach a certain $r^j_n$ (the equilibrium point). Thus, we have proven that if $\theta(t)$ is a phase-locked state, then $\theta(t)$ is also a full phase-locked state.

$\mathrm{(FPLS)}$ implies $\mathrm{(PLS)}$ This result follows directly from Definition \ref{def 2}.

The proof of Theorem \ref{main 1} is complete.
\end{proof}
\subsection{Proof of Theorem \ref{OP eqiv sync}}

\begin{proof}
In the case of the all-to-all coupling topology, the finite-root property has already been established in \cite{ha2016finiteness}. Therefore, by Theorem~\ref{main 1}, all synchronization notions in Definition~\ref{def 2} are equivalent. Suppose that $\theta(t)$ is a synchronization state (Definition \ref{def 2}). There exist constants $\theta_j^*\in\mathbb{R}$, for all $j\in\{1,\ldots,N\}$ such that 
\begin{align*}
    \lim_{t\rightarrow\infty} \theta_j(t)=\theta_j^* ~~\mbox{for}~~\mbox{all}~~j\in\{1,\ldots,N\}.
\end{align*}
Recalling Definition \ref{def 3}, we obtain $Z(t)$ tends to be a constant as $t$ tends to infinity. This shows that there exists $0<R^*\leq 1$ and $\Phi^*\in\mathbb{R}$ such that 
\begin{align*}
    \lim_{t\rightarrow \infty} R(t)=R^*~~\mbox{and}~~\lim_{t\rightarrow \infty} \Phi(t)=\Phi^*.
\end{align*}

Suppose that there exists $k$ such that 
\begin{align*}
    |\omega_k|>R(t)|\lambda|~~\mbox{when}~t~\mbox{is sufficiently large}.
\end{align*}
This demonstrates that $|\dot{\theta}_k|$ is bounded below by a nonzero constant, so $\theta(t)$ is not a frequency synchronization state. The contradiction asserts the proof.

On the other hand, assume that $\theta(t)$ is an OP synchronization state, or equivalently satisfies \eqref{Phi limit}. Suppose that $\theta(t)$ is not a synchronization state (Definition \ref{def 2}). Then, there exists at least one $\ell\in\{1,\ldots,N\}$ such that $|\theta_\ell|$ is unbounded. Recall that we assume $\max_{j\in\{1,\ldots,N\}}|\omega_j|=|\omega_k|$. In the following discussion, to simplify the exposition, we assume 
$\lambda>0$ without loss of generality. Suppose that $R^*>|\omega_k|/\lambda$. Then we have $R^*>|\omega_\ell|/\lambda$. Then there exists $0<\epsilon \ll 1$ such that 
\begin{align*}
    |\omega_\ell|<\lambda (R^*-\epsilon)\sin
    \left(\frac{\pi}{2}-\epsilon\right).
\end{align*}
By \eqref{Phi limit}, for this $\epsilon$, there exists $T>0$ such that 
\begin{align*} 
    R(t)>R^*-\epsilon~~\mbox{and}~~ |\Phi(t)-\Phi^*|<\epsilon~~\mbox{for}~~\mbox{all}~~t>T
    .
\end{align*}

Since $|\theta_\ell|$ is unbounded, there exists the first moment $t^*_1>T$ such that $\theta_\ell(t^*_1)=\Phi^*+\pi/2+2n_1\pi$ for some $n_1\in\mathbb{N}^+$, which implies $\dot{\theta}_\ell(t^*_1)\geq 0$, or there exists the first moment $t^*_2>T$ such that $\theta_\ell(t^*_2)=\Phi^*-\pi/2-2n_2\pi$ for some $n_2\in\mathbb{N}^+$, which implies $\dot{\theta}_\ell(t^*_2)\leq 0$. For the first case, we obtain, at $t=t^*_1$,
\begin{align*}
    d_\ell \dot{\theta}_\ell
    =
    \omega_\ell + \lambda R(t_1^*) \sin\left( \Phi(t_1^*) - \Phi^* - \frac{\pi}{2} \right)
    <
    \omega_\ell-\lambda (R^*-\epsilon)\sin\left(\frac{\pi}{2}-\epsilon\right)<0.
\end{align*}
This contradiction asserts that $\theta_\ell$ is bounded above. Similarly, for the second case, we obtain, at $t=t^*_2$,
\begin{align*}
    d_\ell\dot{\theta}_\ell
    =
    \omega_\ell + \lambda R(t_2^*) \sin\left( \Phi(t_2^*) - \Phi^* + \frac{\pi}{2} \right)
    >
    \omega_\ell+\lambda (R^*-\epsilon)\sin\left(\frac{\pi}{2}-\epsilon\right)>0.
\end{align*}
This completes the proof.
\end{proof}

\color{black}

\section{Numerical Simulations}
\label{sec6}

In this section, we present eight numerical simulation scenarios to verify our analytical framework and demonstrate the equivalence of synchronization states across various network and timescale configurations. To strictly isolate the topological effects of the coupling matrix $\Lambda$ and the timescale heterogeneity $d_j$, we extract the common simulation parameters to a shared baseline.

\subsection{Simulation Setup and Integration Methodology}\label{subsec:numer_setup}
Across all eight scenarios presented in this manuscript, we simulate a network of $N=100$ coupled oscillators, with the same set of initial phases and natural frequencies. The latter two are sampled from certain distributions, to be detailed below, and then fixed for all scenarios.

To facilitate the description of sampling mechanisms used in generating the parameters, let us first introduce two notations for the type of probability distributions used. The uniform distribution over an interval $\mathsf{I}\subset\mathbb{R}$ is denoted $\mathcal{U}(\mathsf{I})$. The Gaussian (normal) distribution with mean $\mu\in\mathbb{R}$ and standard deviation $\sigma>0$ is denoted $\mathcal{N}(\mu,\sigma^2)$. We also use the standard abbreviations ``r.v." for ``random variables", and ``i.i.d." for a shorthand for ``independently and identically distributed". We write $X\sim\mathcal{P}$ as a shorthand for ``r.v. $X$ is sampled from probability distribution $\mathcal{P}$".

The initial phases are fixed to a single realization sampled from $N$ i.i.d. $\mathcal{U}([0,2\pi))$ r.v.s. The natural frequencies are sampled from $N$ i.i.d. $\mathcal{N}(0,0.5^2)$ r.v.s, subsequently centered to ensure a zero mean (i.e., $\sum_{j=1}^N \omega_j = 0$, namely, using the co-rotating frame), then fixed. In particular, the range of the natural frequencies is $\omega_\mathrm{M}-\omega_\mathrm{m} \approx 2.3697$, and the maximum absolute natural frequency is $\omega := \max_{j\in\{1,\ldots,N\}}|\omega_j| \approx 1.2162$. The exact values for initial phases and natural frequencies can be found in the publicly available simulation dataset~\cite{lo_simulation_2026}.

Given the nonlinear nature of Kuramoto models and the requirement to capture asymptotic stability, simulations were conducted over an extended time horizon of $T=1000$~units. We utilized MATLAB's \texttt{ode89} solver, an adaptive step-size integrator based on an explicit Runge-Kutta (8,7) pair. To prevent the accumulation of local truncation errors, strict error boundaries were enforced (relative tolerance of $10^{-8}$ and absolute tolerance of $10^{-10}$). 

In the following, based on specific settings of coupling and timescale coefficients, we divide the simulated scenarios into four subsections, each with two cases, the first with all types of synchronization states concurring, the second with all states failing concurrently. In each case, both the short-term ($t\in[0,10]$) and long-term ($t\in[0,T]$) evolutions of oscillator phases $\theta_j(t)$, frequencies $\dot{\theta}_j(t)$, and complex order parameters $Z(t)$ are presented in one figure. In particular, we plot the magnitude $R(t)$ and the phase $\Phi(t)$ of $Z(t)$ on the same sub-figure. Recall that in Def.~\ref{def 3}, we choose $\Phi(t)\in\mathbb{R}$ to be continuous such that $\Phi(0)\in[0,2\pi)$. This means that $\Phi(t)$ is not restricted to be the principal argument of $Z(t)$. This choice aids visualization of $Z(t)$ evolution especially in cases without synchronization, where the principal argument of $Z(t)$ can wrap around $2\pi$ frequently.

Animations of oscillators' phase evolutions around the unit circle in each of the eight scenarios are generated and publicly available in the dataset~\cite{lo_simulation_2026}. In these animations, the complex order parameter (physically, the phase centriod) is represented as a black ``star". For some, these animations might serve as a better visualization of synchronization phenomena or the lack thereof. It is worth noting that within the same dataset, final (i.e., at $t=T$) values of oscillator phases, frequencies, and order parameter are also available.

\subsection{Classical Kuramoto Model}
\label{subsec:numer_classical}

\begin{figure}[!ht]\centering\includegraphics[width=0.95\linewidth]{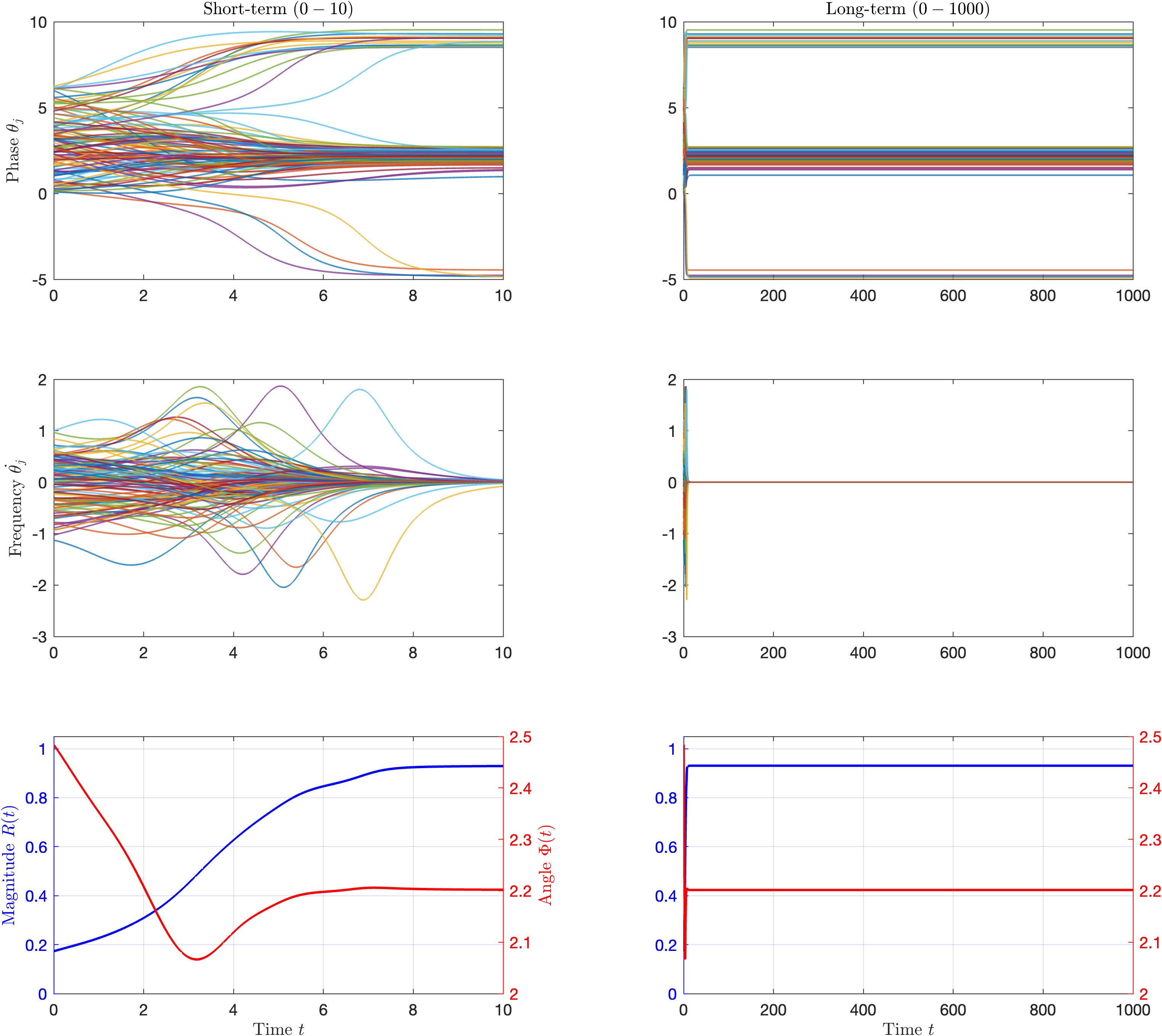}
\caption{Phase, frequency, and order parameter evolution of $N=100$ oscillators in the classical Kuramoto system~\eqref{classical Kuramoto}. The coupling strength is constant with $\lambda=1.44$. 
The maximum absolute final frequency $\max_j|\dot{\theta}_j(T)|\approx 1.8443\times10^{-10}$, demonstrating frequency synchronization. The observed final ($t=T$) order parameter is $Z(T)\approx-0.5488+0.7515i$, with $R(T)\approx 0.9305$ and $\Phi(T)\approx2.2015$. 
\color{black}
}
\label{fig:Classical_FirstOrder_Sync}
\end{figure}

\begin{figure}[!ht]\centering\includegraphics[width=0.95\linewidth]{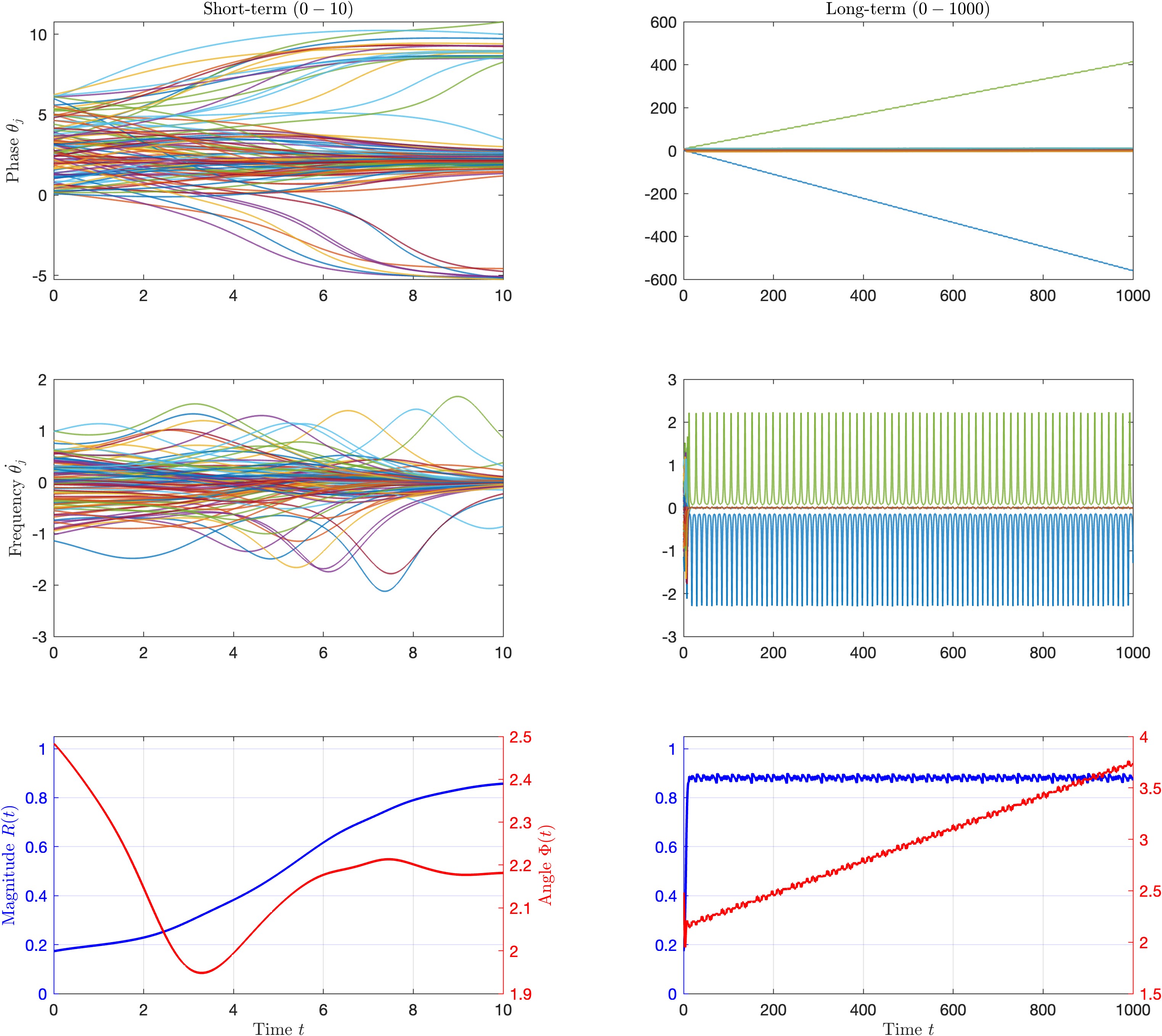}
\caption{Phase, frequency, and order parameter evolution of $N=100$ oscillators in the classical Kuramoto system~\eqref{classical Kuramoto}. The coupling strength is constant with $\lambda=1.22$.}
\label{fig:Classical_FirstOrder_No_Sync}
\end{figure}

We first present numerical simulations for the classical Kuramoto system~\eqref{classical Kuramoto} under constant, all-to-all coupling (i.e., $\lambda_{jk}\equiv\lambda>0$ for all $j\neq k$) and constant timescales ($d_j=1$ for all $j$) with different $\lambda$ values. In Fig.~\ref{fig:Classical_FirstOrder_Sync}, we take $\lambda=1.44$, while in Fig.~\ref{fig:Classical_FirstOrder_No_Sync}, we take $\lambda=1.22$.

The dynamics observed in Fig.~\ref{fig:Classical_FirstOrder_Sync} and Fig.~\ref{fig:Classical_FirstOrder_No_Sync} are perfectly consistent with Theorem~\ref{OP eqiv sync}. Specifically, all modes of synchronization discussed in this paper—(full) phase-locking, frequency synchronization, and order parameter synchronization—occur simultaneously in Fig.~\ref{fig:Classical_FirstOrder_Sync}, whereas none of these modes occur in Fig.~\ref{fig:Classical_FirstOrder_No_Sync}. 

We observe in Fig.~\ref{fig:Classical_FirstOrder_Sync} that $\Phi(t)$ converges roughly to $\Phi(T)\approx 2.2015$, and $R(t)$ converges roughly to $R(T)\approx 0.9305 \geq 0.8446 \approx \omega/\lambda$, satisfying the bounds in \eqref{Phi limit}, so by Def.~\ref{def 3}, this is an OPSS. Moreover, we can verify that the smallest upper bound for $|Z^*|:=\lim_{t\to\infty}R(t)$ given by Corollary~\ref{upper bdd for R}, calculated as $1-\frac{1}{N}+\frac{1}{N}\sqrt{1-\frac{\omega^2}{\lambda^2}}\approx 0.9954$, successfully bounds the empirically observed limit of roughly $0.9305$. 

Fig.~\ref{fig:Classical_FirstOrder_No_Sync} provides a concrete demonstration of the advantage of our newly proposed necessary condition on $\lambda$ for synchronization, as elaborated in the proof of Thm.~\ref{NC for sync}. Evaluating the two bounds in this case reveals that
\begin{align*}
\frac{\omega(N^2+1)}{N^2-N+\sqrt{2 N}}\approx1.2269>\lambda=1.22>1.2088\approx\frac{(\omega_{\mathrm{M}}-\omega_{\mathrm{m}})N}{2\sin(\theta_{\mathrm{opt}})+2(N-2)\sin\left(\frac{\theta_{\mathrm{opt}}}{2}\right)}
,
\end{align*}
providing a concrete example that our newly proposed necessary condition successfully rules out the possibility of synchronization at $\lambda=1.22$, whereas the established necessary condition provided in~\cite{chopra2009exponential} (the rightmost value) fails to rule it out.

An interesting observation in Fig.~\ref{fig:Classical_FirstOrder_No_Sync} is that except for two oscillators, the rest $98$ oscillators appears to achieve partial synchronization. We believe this explains why the order parameter magnitude $R(t)$ appears to stabilize around a constant value with apparent ongoing small oscillatory deviations. Moreover, $\Phi(t)$ appears to be increasing without bound as $t$ increases, revealing that the order parameter phase keeps wrapping around the unit circle. We leave the possibility of characterizing partial synchronization phenomena in Kuramoto models using the newly proposed concept of order parameter synchronization as future work.

\subsection{Heterogeneous Coupling with Constant Timescales}
\label{subsec:numeric_non-unif_coup}
We now extend our simulations to the generalized Kuramoto system~\eqref{G Kuramoto} under heterogeneous, symmetric coupling, while maintaining constant timescales ($d_j=1$ for all $j$). In Fig.~\ref{fig:Nonuniform_FirstOrder_Sync}, the coupling coefficients $\lambda_{jk}$ are drawn i.i.d. from $\mathcal{N}(2,0.5^2)$ for all $j>k$, and in Fig.~\ref{fig:Nonuniform_FirstOrder_NoSync}, we sample $\lambda_{jk}\sim\mathcal{N}(0,0.5^2)$ for all $j>k$ in an i.i.d. manner. Notably, in the second scenario (Fig.~\ref{fig:Nonuniform_FirstOrder_NoSync}), the sampled coupling coefficients are sign-mixed, representing localized antagonistic interactions.

\begin{figure}[!ht]\centering\includegraphics[width=0.95\linewidth]{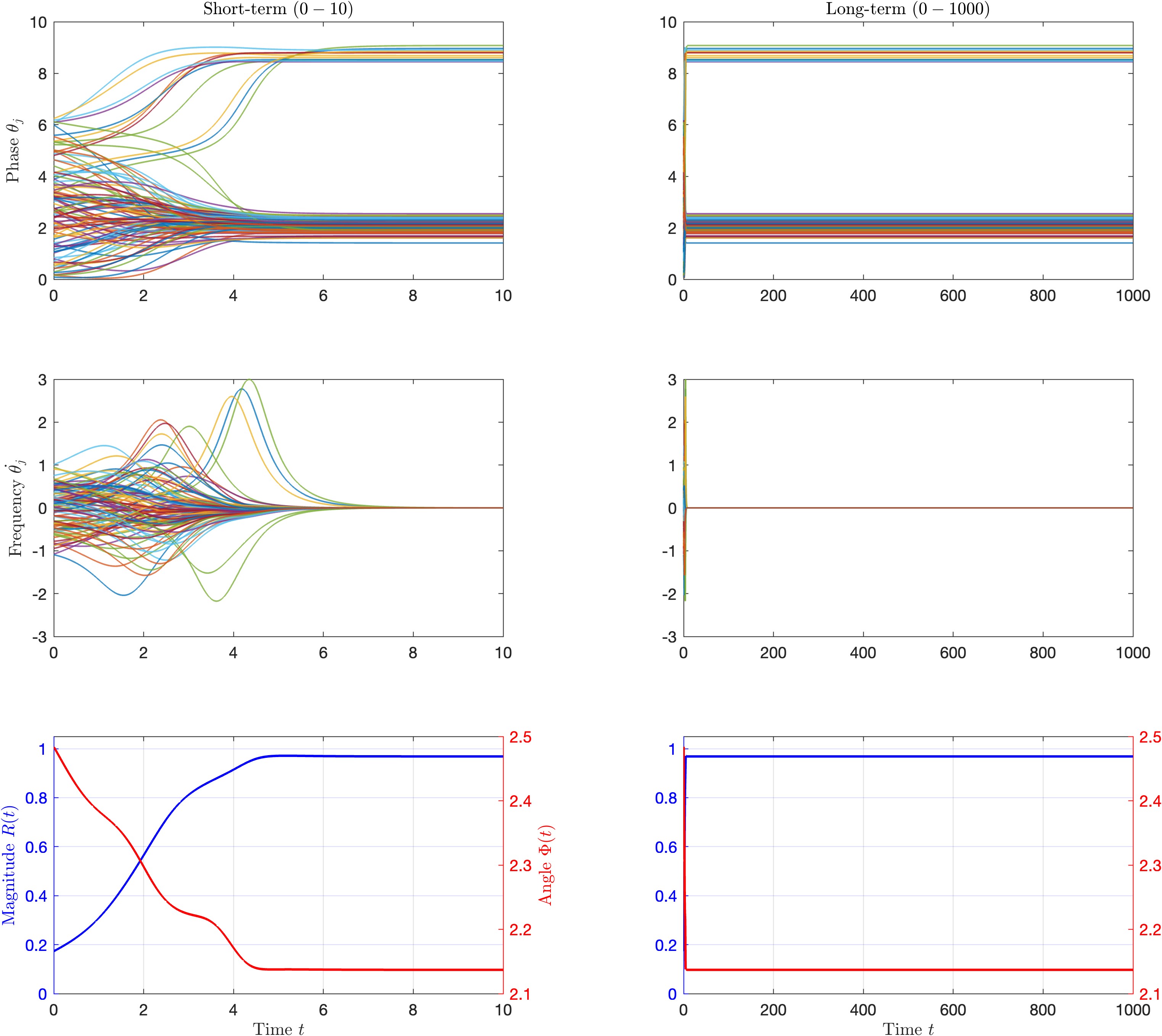}
\caption{Phase, frequency, and order parameter evolution of $N=100$ oscillators in the
general first-order Kuramoto system~\eqref{G Kuramoto} with uniform timescales ($d_j=1$) but heterogeneous symmetric coupling. We sampled $\lambda_{jk}\sim\mathcal{N}(2,0.5^2)$, for all $j>k$, in an i.i.d. manner. The maximum absolute final frequency $\max_j|\dot{\theta}_j(T)|\approx 5.0152\times10^{-10}$, demonstrating frequency synchronization. The observed final ($t=T$) order parameter is $Z(T)\approx-0.5196+0.8173i$, with $R(T)\approx 0.9685$ and $\Phi(T)\approx2.1370$. 
\color{black}
}
\label{fig:Nonuniform_FirstOrder_Sync}
\end{figure}

\begin{figure}[!ht]\centering\includegraphics[width=0.95\linewidth]{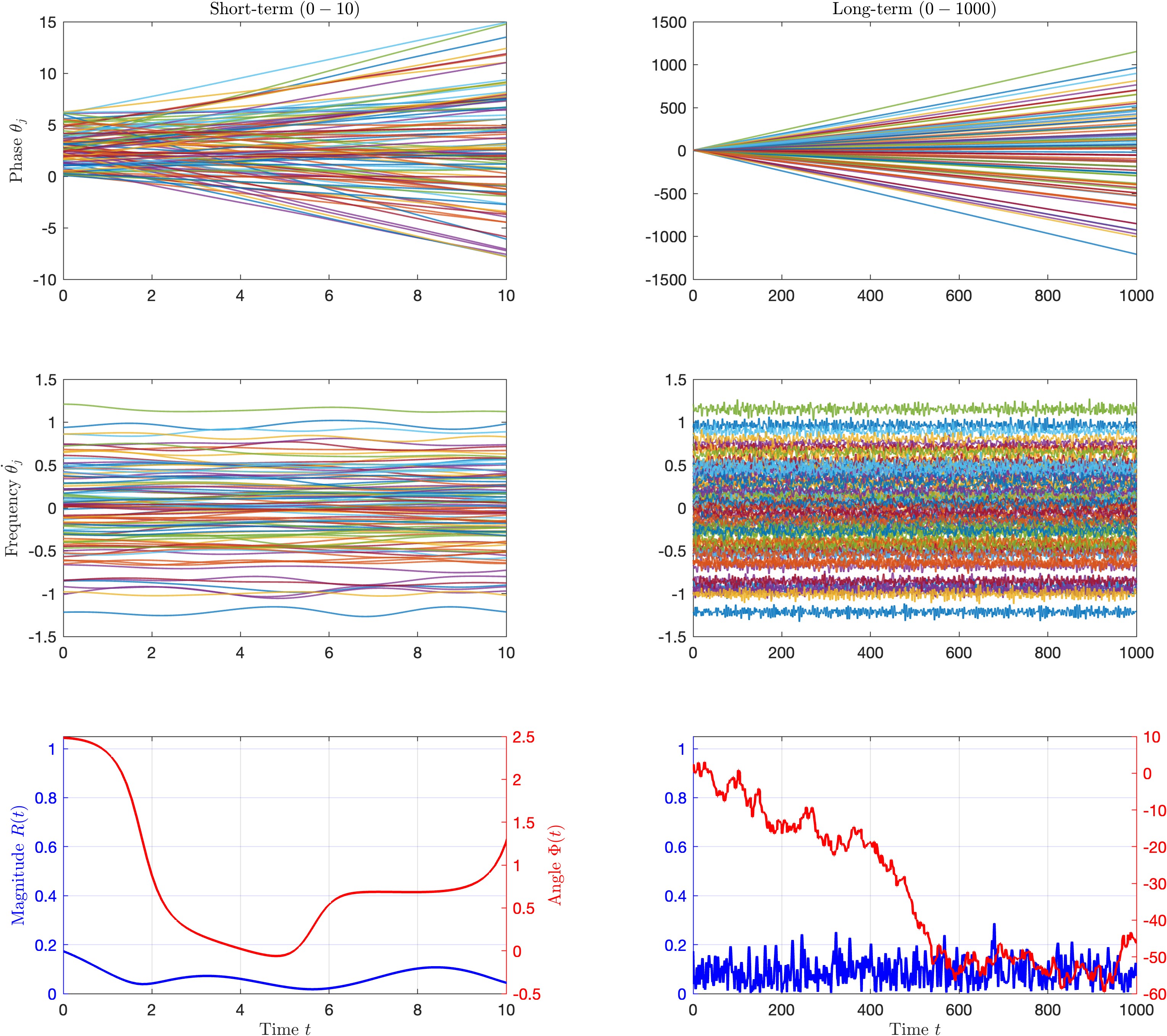}
\caption{Phase, frequency, and order parameter evolution of $N=100$ oscillators in the general first-order Kuramoto system~\eqref{G Kuramoto} with uniform timescales ($d_j=1$) but heterogeneous symmetric coupling. We sampled $\lambda_{jk}\sim\mathcal{N}(0,0.5^2)$, for all $j>k$, in an i.i.d. manner. The realized $\lambda_{jk}$ are sign-mixed, representing antogonistic coupling.}
\label{fig:Nonuniform_FirstOrder_NoSync}\end{figure}

The dynamics in Fig.~\ref{fig:Nonuniform_FirstOrder_Sync} and Fig.~\ref{fig:Nonuniform_FirstOrder_NoSync} remain consistent with Theorem~\ref{main 1}: the modes of phase-locking and frequency synchronization either co-occur (Fig.~\ref{fig:Nonuniform_FirstOrder_Sync}) or concurrently fail (Fig.~\ref{fig:Nonuniform_FirstOrder_NoSync}). 

We remark that, following the proof of Theorem~\ref{OP eqiv sync}, one can analytically show that OPSS is in fact implied by any mode in Definition~\ref{def 2}, even for the case of generic (heterogeneous) symmetric couplings, assuming uniform timescales. However, it remains an open question whether the converse statement—that OPSS guarantees the other modes—holds under the same assumptions on coupling and timescale coefficients. Extensive numerical searches have thus far failed to produce a counter-example to this converse: in all our simulated instances where OPSS occurred, all other modes of synchronization also emerged. We thus conjecture that the converse is true, and leave its proof as future work.

\subsection{Heterogeneous Coupling and Non-Constant Timescales}
\label{subsec:numeric_non-unif_coup_and_dj}

\begin{figure}[!ht]\centering\includegraphics[width=0.95\linewidth]{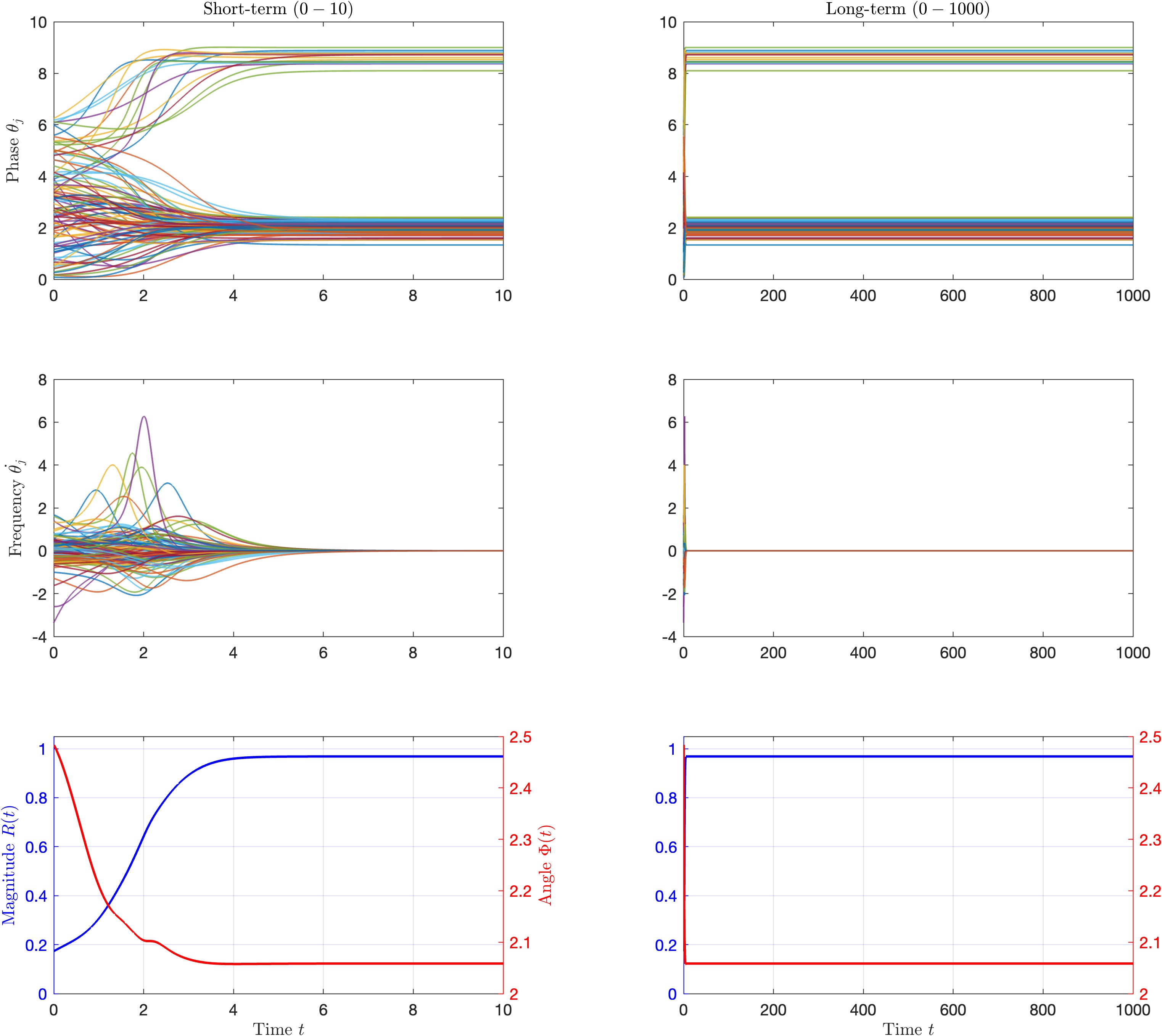}
\caption{
Phase, frequency, and order parameter evolution of $N=100$ oscillators in the general first-order Kuramoto system~\eqref{G Kuramoto}. Timescale coefficients are heterogeneous, sampled i.i.d. as $d_j \sim \mathcal{U}(0.25,1.75)$. Coupling coefficients are chosen identically to those of Fig.~\ref{fig:Nonuniform_FirstOrder_Sync}, where they were sampled i.i.d. as $\lambda_{jk}\sim\mathcal{N}(2,0.5^2)$ for all $j>k$. The maximum absolute final frequency $\max_j|\dot{\theta}_j(T)|\approx 2.2564\times10^{-9}$, demonstrating frequency synchronization. The observed final ($t=T$) order parameter is $Z(T)\approx-0.4539+0.8555i$, with $R(T)\approx 0.9685$ and $\Phi(T)\approx2.0586$. 
\color{black}
}
\label{fig:Nonuniform_Unequal_dj_FirstOrder_Sync}
\end{figure}

\begin{figure}[!ht]\centering\includegraphics[width=0.95\linewidth]{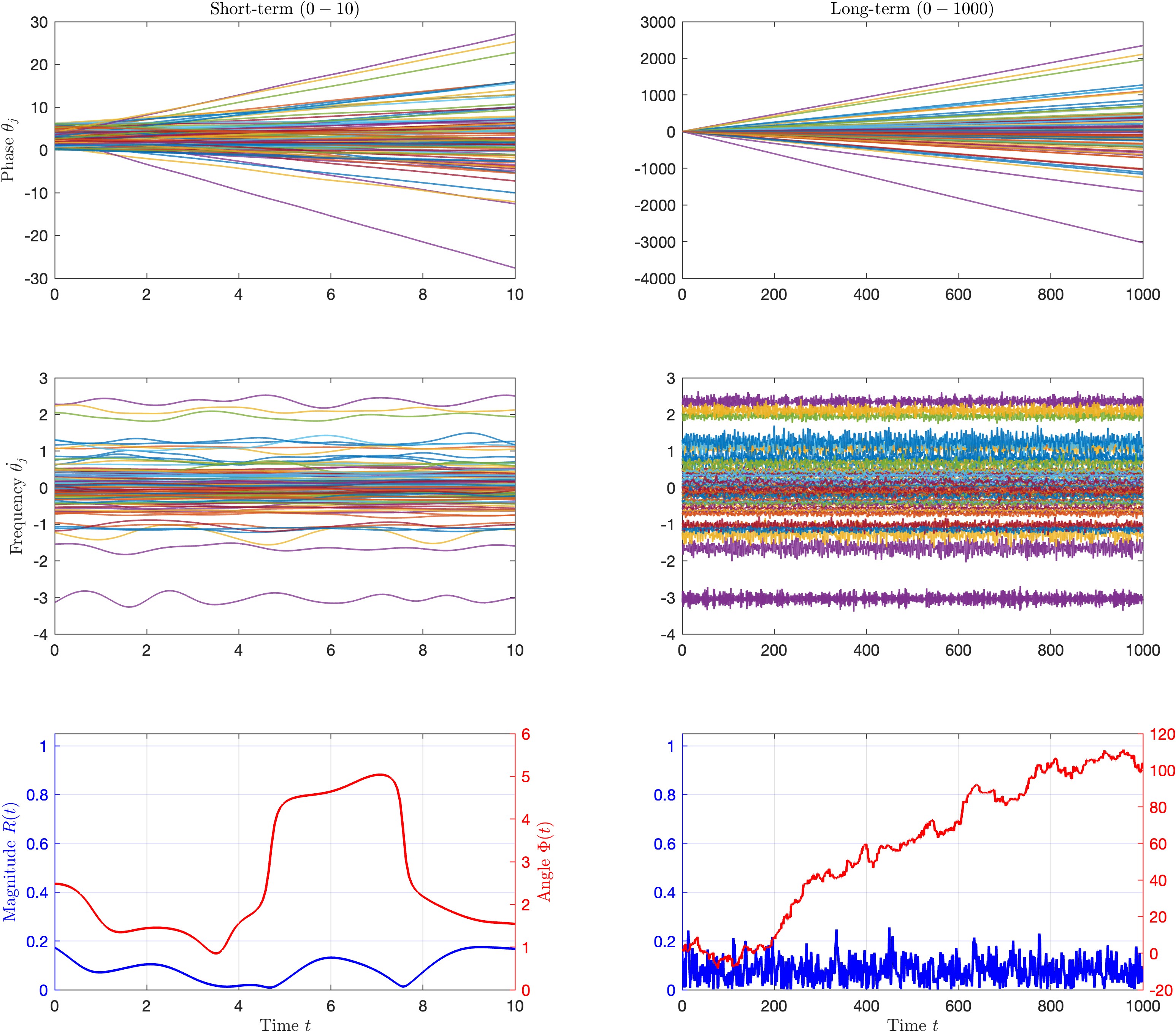}
\caption{
Phase, frequency, and order parameter evolution of $N=100$ oscillators in the general first-order Kuramoto system~\eqref{G Kuramoto}. Timescale coefficients are chosen identically to those of Fig.~\ref{fig:Nonuniform_Unequal_dj_FirstOrder_Sync}, where they were sampled i.i.d. as $d_j \sim \mathcal{U}(0.25,1.75)$. Coupling coefficients are chosen identically to those of Fig.~\ref{fig:Nonuniform_FirstOrder_NoSync}, where they were sampled i.i.d. as $\lambda_{jk}\sim\mathcal{N}(0,0.5^2)$ for all $j>k$.
}
\label{fig:Nonuniform_Unequal_dj_FirstOrder_NoSync}
\end{figure}

Finally, we investigate the ``fully heterogeneous" Kuramoto system~\eqref{G Kuramoto}, i.e., with generic heterogeneous symmetric coupling and non-constant timescales. We present the system dynamics where the timescale coefficients are sampled i.i.d. as $d_j \sim \mathcal{U}(0.25, 1.75)$, and then fixed. In Fig.~\ref{fig:Nonuniform_Unequal_dj_FirstOrder_Sync}, we adopt the same realized samples of the positive-mean heterogeneous coupling as in Fig.~\ref{fig:Nonuniform_FirstOrder_Sync}, where $\lambda_{jk}\sim\mathcal{N}(2,0.5^2)$ for $j>k$. Simiarly, in Fig.~\ref{fig:Nonuniform_Unequal_dj_FirstOrder_NoSync}, we adopt the same realized samples of the zero-mean heterogeneous coupling as in Fig.~\ref{fig:Nonuniform_FirstOrder_NoSync}, where $\lambda_{jk}\sim\mathcal{N}(0,0.5^2)$ for $j>k$. Again, the results in Fig.~\ref{fig:Nonuniform_Unequal_dj_FirstOrder_Sync} and Fig.~\ref{fig:Nonuniform_FirstOrder_NoSync} is consistent with Thm.~\ref{main 1}.

Because of the deliberate choice of parameters, it is interesting to compare Fig.~\ref{fig:Nonuniform_FirstOrder_Sync} with Fig.~\ref{fig:Nonuniform_Unequal_dj_FirstOrder_Sync}, and similarly Fig.~\ref{fig:Nonuniform_FirstOrder_NoSync} with Fig.~\ref{fig:Nonuniform_Unequal_dj_FirstOrder_NoSync}, to observe the effect of non-constant timescales with identical heterogeneous coupling coefficients. In both comparisons, it seems that the effect of switching from a constant, unit timescale to a non-constant but uniformly distributed timescales around the same mean (unity) does not induce significant deviations in the system dynamics. This leads us to investigate the next two scenarios, where we deliberately introduced timescales that differ by at most $4$ orders of magnitude (in base $10$).

\subsection{Heterogeneous Coupling and Deterministic Timescales Spanning Several Orders of Magnitude}
\label{subsec:numeric_multiscale}

\begin{figure}[!ht]
    \centering
    \includegraphics[width=0.95\linewidth]{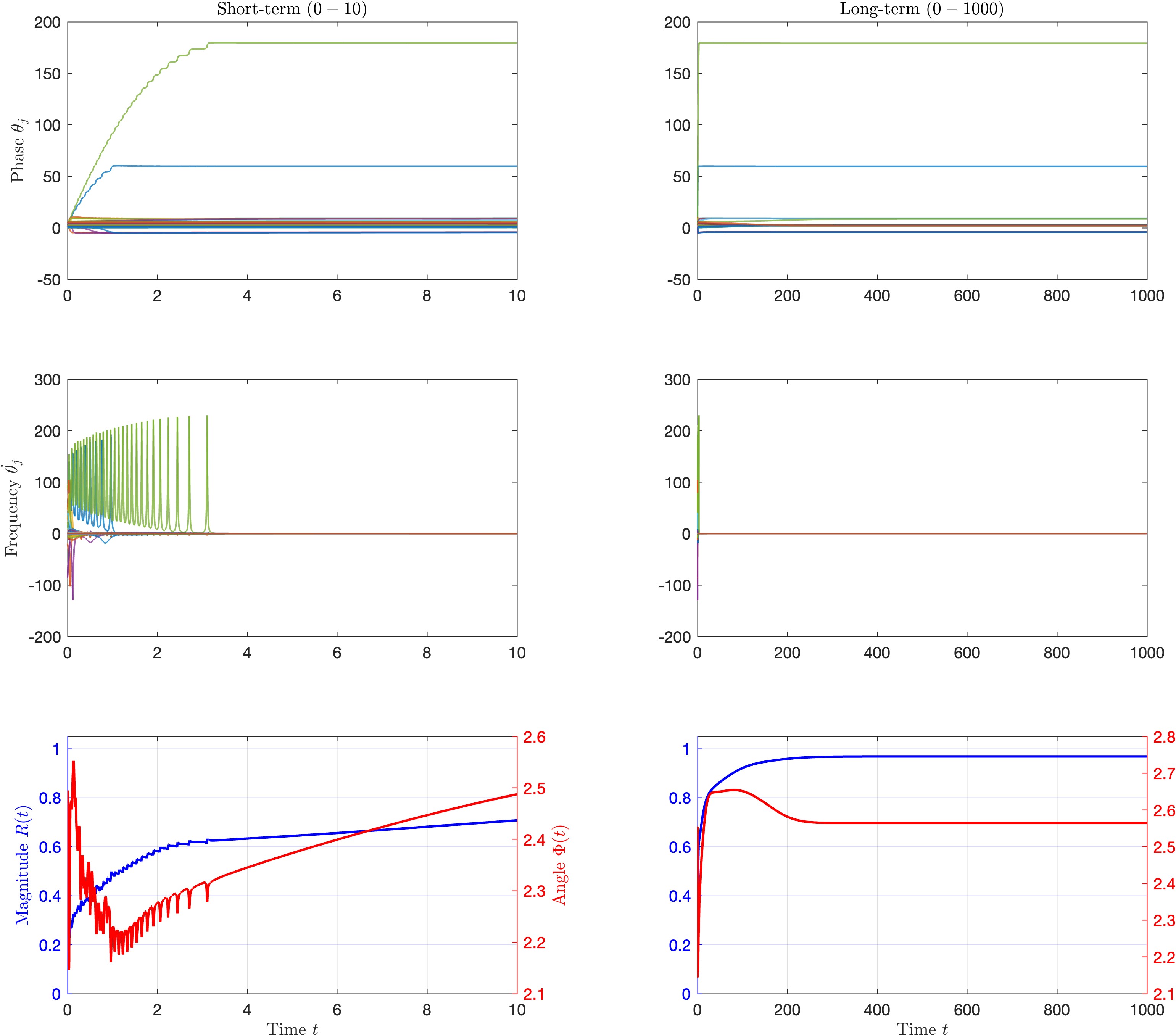}
    \caption{
Phase, frequency, and order parameter evolution of $N=100$ oscillators in the general first-order Kuramoto system~\eqref{G Kuramoto}. Timescale coefficients $d_j$ are deterministic and multi-scale, spanning $\{10^{-2}, 10^{-1}, 10^0, 10^1, 10^2\}$. Coupling coefficients are chosen identically to those of Fig.~\ref{fig:Nonuniform_FirstOrder_Sync} and Fig.~\ref{fig:Nonuniform_Unequal_dj_FirstOrder_Sync}, where they were sampled i.i.d. as $\lambda_{jk}\sim\mathcal{N}(2,0.5^2)$ for all $j>k$. The maximum absolute final frequency $\max_j|\dot{\theta}_j(T)|\approx 5.6374\times10^{-7}$, demonstrating frequency synchronization. The observed final ($t=T$) order parameter is $Z(T)\approx-0.8116+0.5285i$, with $R(T)\approx 0.9685$ and $\Phi(T)\approx2.5644$. 
\color{black}
}
    \label{fig:Multiscale_Sync}
\end{figure}

\begin{figure}[!ht]
    \centering
    \includegraphics[width=0.95\linewidth]{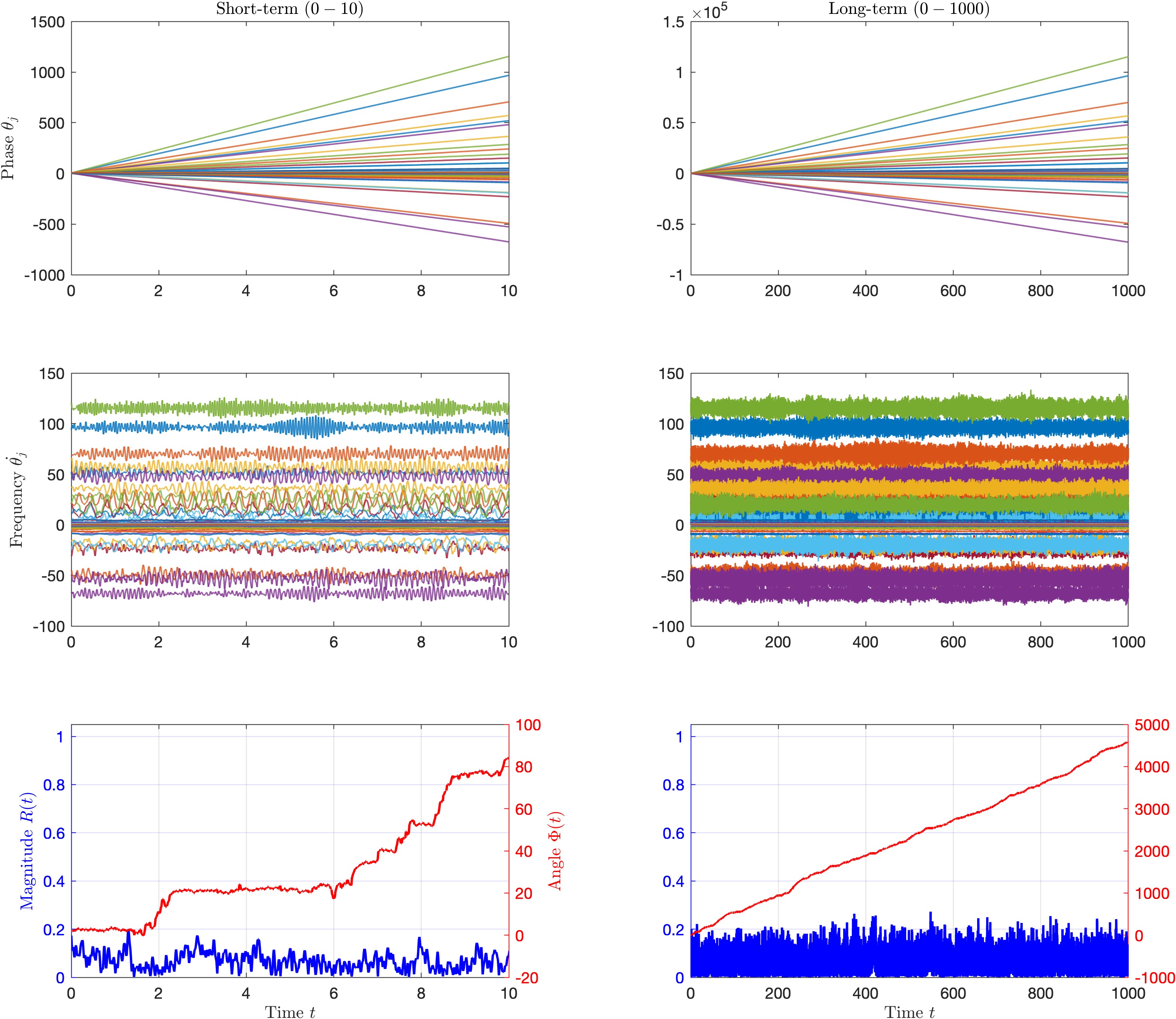}
    \caption{
Phase, frequency, and order parameter evolution of $N=100$ oscillators in the general first-order Kuramoto system~\eqref{G Kuramoto}. Timescale coefficients $d_j$ are deterministic and multi-scale, spanning $\{10^{-2}, 10^{-1}, 10^0, 10^1, 10^2\}$. Coupling coefficients are chosen identically to those of Fig.~\ref{fig:Nonuniform_FirstOrder_NoSync} and Fig.~\ref{fig:Nonuniform_Unequal_dj_FirstOrder_NoSync}, where they were sampled i.i.d. as $\lambda_{jk}\sim\mathcal{N}(0,0.5^2)$ for all $j>k$.
}
    \label{fig:Multiscale_NoSync}
\end{figure}

To test the limits of our proposed equivalence theorem among various synchronization modes under extreme dynamic heterogeneity, we construct a final pair of scenarios where the timescale coefficients $d_j$ span multiple orders of magnitude. Specifically, the network is partitioned into five equal-sized tiers (20 oscillators each), with each tier assigned a deterministic timescale coefficient from the set $\{10^{-2}, 10^{-1}, 10^0, 10^1, 10^2\}$. 

Figure~\ref{fig:Multiscale_Sync} illustrates the system dynamics when this highly heterogenous timescale setup is paired with the heterogeneous coupling coefficients identical to that of Fig.~\ref{fig:Nonuniform_FirstOrder_Sync} and Fig.~\ref{fig:Nonuniform_Unequal_dj_FirstOrder_Sync}, namely, the positive-mean heterogeneous coupling $\lambda_{jk}\sim\mathcal{N}(2,0.5^2)$ for $j>k$. The extreme variance in $d_j$ naturally induces highly complex and hierarchical transient dynamics. Oscillators with small timescale coefficients ($d_j \ll 1$) adjust their phases almost instantaneously, forming rapid local clusters, whereas oscillators with significant inertia ($d_j \gg 1$) dictate the macroscopic settling time of the network. This observation is particular apparent in the accompanying animation in the dataset~\cite{lo_simulation_2026}.

Despite this severe scale separation and the prolonged transient phase, the system ultimately achieves (full) phase-locking and frequency synchronization. We believe this scenario demonstrates the robustness of our proposed equivalence theorem Thm.~\ref{main 1} among wide-ranging timescales. Also, it can be observed that order parameter synchronization is achieved in Fig.~\ref{fig:Multiscale_Sync}.

Figure~\ref{fig:Multiscale_NoSync} presents the identical multi-orders-of-magnitude timescale distribution, paired with mean-zero antagonistic coupling coefficients identical to those of Fig.~\ref{fig:Nonuniform_FirstOrder_NoSync} and Fig.~\ref{fig:Nonuniform_Unequal_dj_FirstOrder_NoSync}, where $\lambda_{jk}\sim\mathcal{N}(0,0.5^2)$ i.i.d. for $j>k$. Under these conditions, the network entirely fails to synchronize across all definitions. The extreme multi-scale nature of the system exacerbates the continuous, incoherent scattering of the phases. The scenario in Fig.~\ref{fig:Multiscale_NoSync} visually suggests that the absence of ``macroscopic" order (i.e., order parameter synchronization) precludes ``microscopic" phase-locking or frequency synchronization. While not covered by Thm.~\ref{main 1}, the contrapositive statement that all other modes of synchronization implies OPSS in generic heterogeneous coupling and timescale coefficients also poses as an interesting conjecture for future work.

\color{black}

\section{Conclusion}
On the fiftieth anniversary of the Kuramoto model, this work clarifies a foundational question in synchronization theory: whether commonly used synchronization criteria represent distinct collective regimes or the same asymptotic dynamical behavior viewed at different levels. For finite-dimensional Kuramoto flows with generic coefficients, we prove that full phase locking, phase locking, and frequency synchronization are dynamically equivalent, independently of the underlying network structure. In the canonical all-to-all setting, we further show that order-parameter synchronization is equivalent to these phase-based notions, providing a rigorous dynamical justification for the Kuramoto order parameter as a macroscopic indicator of synchronization.

Our approach is fully nonlinear and non-perturbative, relying on the global geometry and periodic structure of the phase-difference flow together with a finite-root mechanism. As an application of the unified framework, we also derive a necessary condition that rules out synchronization below a critical coupling threshold and corroborate the theory with numerical experiments. These results offer a coherent dictionary between microscopic phase dynamics and macroscopic coherence and provide a structural basis for extending equivalence principles to higher-order oscillator models, mixed-sign interactions, and broader networked systems.

\section{Appendix}
\subsection{Necessary condition} \label{5_1}
\begin{theorem}  \label{NC for sync} Let us consider the system \eqref{d classical Kuramoto}. There exists a coupling strength
\begin{align}
    \label{eq:critical_coupling_strength}
    \lambda_c
    :=
    \max\left\{
    \frac{\omega(N^2+1)}{N^2-N+\sqrt{2 N}}
    ~,~
    \frac{(\omega_{\mathrm{M}}-\omega_{\mathrm{m}})N}{2\sin(\theta_{\mathrm{opt}})+2(N-2)\sin\left(\frac{\theta_{\mathrm{opt}}}{2}\right)}
    \right\}
\end{align}
such that for $\lambda<\lambda_c$ the solution $\theta(t)$ cannot achieve synchronization in any sense defined in Definition~\ref{def 2} or Definition~\ref{def 3}, where $\omega:=\max_{j\in\{1,\ldots,N\}}|\omega_j|$, $\omega_\mathrm{M}:=\max_{j\in\{1,\ldots,N\}} \omega_j$, $\omega_\mathrm{m}:=\min_{j\in\{1,\ldots,N\}} \omega_j$ and
\begin{align*}
\theta_{\mathrm{opt}}=2\cos^{-1}\left(\frac{-(N-2)+\sqrt{(N-2)^2+32}}{8}\right)
    .
\end{align*}
\end{theorem}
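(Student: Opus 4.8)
The plan is to pass to the order-parameter form of \eqref{d classical Kuramoto} and argue by contradiction, extracting from the limiting configuration two independent scalar inequalities—one controlled by $\omega$ and one by $\omega_{\mathrm M}-\omega_{\mathrm m}$—that reproduce the two entries of the maximum in \eqref{eq:critical_coupling_strength}. Since $\frac1N\sum_{k=1}^N\sin(\theta_k-\theta_j)=\operatorname{Im}\!\big(e^{-i\theta_j}Z\big)=R\sin(\Phi-\theta_j)$, the flow reads $d_j\dot\theta_j=\omega_j+\lambda R\sin(\Phi-\theta_j)$. Assume $\theta(t)$ synchronizes in one of the senses of Definition~\ref{def 2} or Definition~\ref{def 3}. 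By Theorem~\ref{main 1} the full phase-locked, phase-locked and frequency-synchronization states are equivalent, and (when the maximiser of $|\omega_j|$ is unique) Theorem~\ref{OP eqiv sync} adjoins the OP synchronization state; in every such case $\theta(t)$ is simultaneously full phase-locked and frequency synchronized. The former gives $\theta_j(t)-\theta_k(t)\to\theta_{jk}^\ast$, hence $R(t)\to R^\ast$ and $\Phi(t)-\theta_j(t)\to\beta_j^\ast$; the latter gives $\dot\theta_j\to0$. Passing to the limit yields the balance relations $\omega_j=-\lambda R^\ast\sin\beta_j^\ast$, together with $\sum_j\sin\beta_j^\ast=0$ and $\sum_j\cos\beta_j^\ast=NR^\ast$.

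For the $\omega_{\mathrm M}-\omega_{\mathrm m}$ bound I would subtract the balance relations of the extreme oscillators. Writing $\delta^\ast=\theta_{\mathrm M}^\ast-\theta_{\mathrm m}^\ast$ and using $\sin A-\sin B=2\cos\frac{A+B}2\sin\frac{A-B}2$, I obtain
\begin{align*}
\omega_{\mathrm M}-\omega_{\mathrm m}=\frac{2\lambda}{N}\sin\left(\frac{\delta^\ast}{2}\right)\sum_{k=1}^N\cos\left(\theta_k^\ast-\frac{\theta_{\mathrm M}^\ast+\theta_{\mathrm m}^\ast}{2}\right).
\end{align*}
The two terms $k\in\{\mathrm M,\mathrm m\}$ contribute $2\cos(\delta^\ast/2)$, and bounding the remaining $N-2$ cosines by $1$ gives $\omega_{\mathrm M}-\omega_{\mathrm m}\le\frac{2\lambda}{N}\big[\sin\delta^\ast+(N-2)\sin(\delta^\ast/2)\big]$. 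Maximizing $D(\theta)=\sin\theta+(N-2)\sin(\theta/2)$ produces the stationarity condition $4\cos^2(\theta/2)+(N-2)\cos(\theta/2)-2=0$, whose relevant root is exactly $\cos(\theta_{\mathrm{opt}}/2)$; hence $\lambda\ge (\omega_{\mathrm M}-\omega_{\mathrm m})N/\big(2\sin\theta_{\mathrm{opt}}+2(N-2)\sin(\theta_{\mathrm{opt}}/2)\big)$, the second entry of the maximum.

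For the $\omega$ bound I would use the fastest oscillator $k$ with $|\omega_k|=\omega$. Its balance relation gives $\omega=\lambda R^\ast|\sin\beta_k^\ast|\le\lambda R^\ast$, so $\lambda\ge\omega/R^\ast$, and it remains to bound $R^\ast=\frac1N\sum_j\cos\beta_j^\ast$ strictly below $1$. Because $|\sin\beta_k^\ast|=\omega/(\lambda R^\ast)$, the fastest oscillator is forced away from the mean phase, and the constraint $\sum_j\sin\beta_j^\ast=0$ prevents the remaining oscillators from fully compensating; quantifying this deficit and optimizing over the misalignment angle is intended to yield the explicit bound $R^\ast\le (N^2-N+\sqrt{2N})/(N^2+1)$, whence $\lambda\ge\omega(N^2+1)/(N^2-N+\sqrt{2N})$, the first entry of the maximum. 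Combining the two, synchronization forces $\lambda\ge\lambda_c$, the contrapositive of the claim. This also covers Definition~\ref{def 3} directly: its defining inequality \eqref{Rlambda is bigger than omega} reads $R^\ast\ge\omega/\lambda$, which contradicts the upper bound on $R^\ast$ as soon as $\lambda<\lambda_c$.

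I expect the main obstacle to be the sharp upper bound on $R^\ast$ carrying the constant $\sqrt{2N}$: the naive estimate keeping only the fastest oscillator misaligned (with the others bounded by $1$, or balanced by Lagrange multipliers) does not land on the claimed closed form, so identifying the correct relaxation—and the one-parameter constrained optimization whose stationarity equation introduces $\sqrt{2N}$—is the delicate step. A secondary difficulty is the reduction of OP synchronization to locking when the maximiser of $|\omega_j|$ is not unique and Theorem~\ref{OP eqiv sync} does not apply verbatim; there I would argue directly that $Z(t)\to Z^\ast$ with $R^\ast>0$ forces every oscillator to phase-lock—otherwise a drifting oscillator makes $Z(t)$ trace a circle of radius at least $1/N$ and precludes convergence—after which Assumption~\ref{zero mean omega} gives $\dot\theta_j\to0$ and returns us to the equilibrium analysis above.
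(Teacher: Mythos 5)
Your treatment of the second entry of the maximum is essentially correct (and more self-contained than the paper, which simply quotes it from \cite[Theorem 2.1]{chopra2009exponential}): subtracting the balance relations of the extremal oscillators, bounding the $N-2$ intermediate cosines by $1$, and maximizing $D(\theta)=\sin\theta+(N-2)\sin(\theta/2)$ does produce $\theta_{\mathrm{opt}}$ as stated. The genuine gap is exactly where you flagged it: the first entry with the constant $\sqrt{2N}$. Your proposed route --- bounding $R^\ast$ by a pure constant via a constrained optimization over the phases using $\sum_j\sin\beta_j^\ast=0$ --- is not how that constant arises, and quantifying the ``compensation deficit'' from that constraint gives a different (and messier) bound. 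Moreover, your dismissal of the ``naive estimate keeping only the fastest oscillator misaligned, with the others bounded by $1$'' is a misjudgment: that naive estimate is precisely the paper's Corollary~\ref{upper bdd for R}, and it does land on the closed form.

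The missing step is that the upper bound on $R^\ast$ is not a constant but a function of $\omega/\lambda$, and the quadratic self-consistency with the lower bound is what produces $\sqrt{2N}$. Concretely: from the fastest oscillator's balance relation and $R^\ast\le 1$ one gets $|\sin\beta_k^\ast|=\omega/(\lambda R^\ast)\ge\omega/\lambda$, hence $\cos\beta_k^\ast\le\sqrt{1-(\omega/\lambda)^2}$, and bounding the other cosines by $1$ in $R^\ast=\frac1N\sum_j\cos\beta_j^\ast$ gives
\begin{align*}
R^\ast\;\le\;1-\frac{1}{N}+\frac{1}{N}\sqrt{1-\left(\frac{\omega}{\lambda}\right)^2}.
\end{align*}
Combining this with the lower bound $R^\ast\ge\omega/\lambda$ (which you correctly identified, both from \eqref{Rlambda is bigger than omega} and from the balance relation) and setting $x=\omega/\lambda$ yields $(N-1)+\sqrt{1-x^2}\ge Nx$, i.e.\ $(N^2+1)x^2-2N(N-1)x+N^2-2N\le 0$; the discriminant of this quadratic equals $4\cdot 2N$, whence $x\le(N^2-N+\sqrt{2N})/(N^2+1)$ and the first entry of $\lambda_c$ follows. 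Your secondary worry about the non-unique maximizer of $|\omega_j|$ is harmless here: for the necessary-condition direction one only needs Definition~\ref{def 2} $\Rightarrow$ the bounds on $R^\ast$, which uses Theorem~\ref{main 1} (no uniqueness hypothesis) and the limiting balance relations, exactly as the paper remarks.
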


\begin{corollary} \label{upper bdd for R}
 Let $\theta(t)$ be a synchronization state (Definition \ref{def 2} or Definition \ref{def 3}) of the system \eqref{d classical Kuramoto}. Let $\lambda>\omega=\max_{j\in\{1,\ldots,N\}}|\omega_j|>0$. If 
 \begin{align*}
     1>R_0\geq 1-\frac{1}{N}+\frac{1}{N}\sqrt{1-\frac{\omega^2}{\lambda^2}},
 \end{align*}
 then $R_0$ is an upper bound of $\lim_{t\rightarrow\infty}R(t)$.
\end{corollary}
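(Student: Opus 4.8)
The plan is to use Theorem~\ref{OP eqiv sync} to reduce the claim to an algebraic relation satisfied by the limiting order parameter, and then to bound it by an elementary estimate. Since $\theta(t)$ is a synchronization state of \eqref{d classical Kuramoto} with uniform coupling and $\lambda>\omega>0$, Theorem~\ref{OP eqiv sync} guarantees that $\theta(t)$ is simultaneously a full phase-locked state, a frequency synchronization state, and an OP synchronization state. In particular there exist limits $\theta_j(t)\to\theta_j^*$, $R(t)\to R^*$, and $\Phi(t)\to\Phi^*$, with $0<R^*\le 1$; positivity of $R^*$ follows from the OP bound \eqref{Rlambda is bigger than omega} together with $\omega>0$, since it forces $R^*\ge\omega/\lambda>0$.

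The first step is to rewrite the coupling term through the order parameter. Using $\tfrac{1}{N}\sum_k\sin(\theta_k-\theta_j)=\mathrm{Im}\!\left(e^{-i\theta_j}Z\right)=R\sin(\Phi-\theta_j)$, equation \eqref{d classical Kuramoto} becomes $d_j\dot\theta_j=\omega_j+\lambda R\sin(\Phi-\theta_j)$. Passing to the limit $t\to\infty$ and invoking $\dot\theta_j(t)\to 0$, I obtain the stationary relation $\sin(\Phi^*-\theta_j^*)=-\omega_j/(\lambda R^*)$ for each $j$, which is well defined precisely because $R^*\ge\omega/\lambda$. Hence $\cos(\Phi^*-\theta_j^*)=\pm\sqrt{1-\omega_j^2/(\lambda^2 (R^*)^2)}$.

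Next, I extract a scalar identity for $R^*$ by projecting the limiting order parameter onto its own phase: multiplying $Z^*=R^*e^{i\Phi^*}=\tfrac{1}{N}\sum_j e^{i\theta_j^*}$ by $e^{-i\Phi^*}$ and taking real parts gives $R^*=\tfrac{1}{N}\sum_j\cos(\theta_j^*-\Phi^*)$. Bounding each cosine above by the corresponding positive square root, singling out an index $k$ with $|\omega_k|=\omega$, and estimating the remaining $N-1$ cosines trivially by $1$, I arrive at $R^*\le\tfrac{N-1}{N}+\tfrac{1}{N}\sqrt{1-\omega^2/(\lambda^2 (R^*)^2)}$. Finally, since $R^*\le 1$ I may replace $(R^*)^2$ by $1$ inside the square root to weaken the bound to $R^*\le 1-\tfrac{1}{N}+\tfrac{1}{N}\sqrt{1-\omega^2/\lambda^2}$. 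This quantity is exactly the infimum of the admissible values of $R_0$, so any $R_0$ in the stated range satisfies $R_0\ge R^*=\lim_{t\to\infty}R(t)$, which is the assertion.

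The computation is elementary once the limit is known to be an equilibrium, so the only genuinely delicate point is the reduction itself: invoking Theorem~\ref{OP eqiv sync} to guarantee that the phases (and hence $Z$) converge, that the limiting configuration satisfies the stationary equation, and that $R^*>0$ so that the inversion of $\sin(\Phi^*-\theta_j^*)$ is legitimate. After this is secured, the estimate is essentially one line, exploiting that the maximal-frequency oscillator forces the smallest cosine contribution while all other contributions are capped by~$1$.
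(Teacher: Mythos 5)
Your proof is correct, and it reaches the same bound through the same two core ingredients the paper uses---the real-part identity $R=\tfrac1N\sum_j\cos(\theta_j-\Phi)$ and the observation that the oscillator with maximal $|\omega_j|$ forces the smallest cosine contribution---but the logical route is genuinely different. The paper argues by contradiction at \emph{finite} times: if $\lim_{t\to\infty}R(t)>R_0$, then eventually every $\cos(\theta_j-\Phi)\ge NR_0-(N-1)$, hence every $|\sin(\theta_j-\Phi)|$ is so small that $|\dot\theta_k|$ for the extremal index stays bounded away from zero, contradicting frequency synchronization. This only requires convergence of $R(t)$ and $\dot\theta_j\to 0$; it never needs the phases themselves to converge. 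You instead pass to the limiting equilibrium, solve the stationary relation $\sin(\Phi^*-\theta_j^*)=-\omega_j/(\lambda R^*)$, and obtain the (in fact slightly sharper, self-referential) bound $R^*\le \tfrac{N-1}{N}+\tfrac1N\sqrt{1-\omega^2/(\lambda^2 (R^*)^2)}$ before relaxing $(R^*)^2$ to $1$. That is cleaner and yields a quantitatively stronger implicit inequality, but it buys this at the cost of needing full phase convergence. One caveat: you invoke Theorem~\ref{OP eqiv sync} wholesale, whose hypothesis includes a strictly unique maximizer of $|\omega_j|$, which the corollary does not assume. If $\theta(t)$ is a synchronization state in the sense of Definition~\ref{def 2}, Theorem~\ref{main 1} already gives you full phase-locking with no frequency hypothesis, so your argument goes through; but in the branch where $\theta(t)$ is \emph{only} assumed to be an OP synchronization state (Definition~\ref{def 3}), your reduction silently imports that extra hypothesis, whereas the paper's finite-time contradiction avoids it. You should either restrict your appeal to Theorem~\ref{main 1} in the Definition~\ref{def 2} branch, or note explicitly that the Definition~\ref{def 3} branch inherits the frequency assumption of Theorem~\ref{OP eqiv sync}.
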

\begin{proof}
    Recall Definition \ref{def 3}. We obtain
    \begin{align*}
        R=Re^{\im(\Phi-\Phi)}=\frac{1}{N}\sum_{j=1}^N e^{\im(\theta_j-\Phi)}.
    \end{align*}
    The real part yields 
    \begin{align*}
        R=\frac{1}{N}\sum_{j=1}^N \cos(\theta_j-\Phi).
    \end{align*}
    If $\lim_{t\rightarrow\infty} R(t)>R_0$, then $\cos(\theta_j-\Phi)$ is at least $N R_0-(N-1)$ for all $j\in\{1,\ldots,N\}$ for sufficiently large $t$. This implies that, for all $j\in\{1,\ldots,N\}$,
    \begin{align*}
    \sin^2(\theta_j-\Phi)=1-\cos^2(\theta_j-\Phi)<1-(N R_0-(N-1))^2,  
    \end{align*}
    for sufficiently large $t$. By assumption, we obtain
    \begin{align*}
        \frac{\omega^2_k}{\lambda^2}=\frac{\omega^2}{\lambda^2}>1-(NR_0-(N-1))^2>\sin^2(\theta_k-\Phi)
    \end{align*}
    for some $k\in\{1,\ldots,N\}$. This means that 
    $|\dot{\theta}_k|$ is bounded below by a nonzero constant 
    \begin{align*}
    |\omega_k|-\lambda\sqrt{1-(NR_0-(N-1))^2}.    
    \end{align*}
    By Theorem \ref{main 1}, $\theta(t)$ is not a synchronization state. However, this contradicts the assumption. The proof is complete.  
\end{proof}

Next, by incorporating \cite{chopra2009exponential}, we can establish a sharper necessary condition for synchronization. 
\begin{proof} [Proof of Theorem~\ref{NC for sync}]
    The second term on the right-hand side of~\eqref{eq:critical_coupling_strength} is quoted directly from~\cite[Theorem 2.1]{chopra2009exponential}, so we only prove the first term. 
    
    Suppose that $\theta(t)$ achieves synchronization in any sense defined in Definition~\ref{def 2} or Definition~\ref{def 3}. A closer look at the proof of Theorem~\ref{OP eqiv sync} reveals that any mode of synchronization defined in Definition~\ref{def 2} implies OP synchronization (i.e., Definition~\ref{def 3}) without any assumptions on the natural frequencies. Combining \eqref{Phi limit} and Corollary \ref{upper bdd for R}, we have
    \begin{align*}
        1-\frac{1}{N}+\frac{1}{N}\sqrt{1-\left(\frac{\omega}{\lambda}\right)^2}
        \geq
        \frac{\omega}{\lambda}
        .
    \end{align*}
    A straightforward calculation reveals that this inequality is equivalent to
    \begin{align*}
        \frac{\omega}{\lambda}
        \leq
        \frac{N^2-N+\sqrt{2N}}{N^2+1}
        .
    \end{align*}
    Therefore, the condition $\lambda<\lambda_c$ implies
    \begin{align*}
        \lambda
        <
        \frac{\omega(N^2+1)}{N^2-N+\sqrt{2 N}}
        ,
    \end{align*}
    which is a contradiction.
     This completes the proof of Theorem \ref{NC for sync}.
\end{proof}
We pause to remark that the two terms in \eqref{eq:critical_coupling_strength} do not have a direct ordering relationship. For example, when $N=2$, 
\begin{align*}
    \frac{5}{4}\omega
    =
    \left.\frac{\omega(N^2+1)}{N^2-N+\sqrt{2N}}\right|_{N=2}
    \leq
    \left.\frac{(\omega_{\mathrm{M}}-\omega_{\mathrm{m}})N}{2\sin(\theta_{\mathrm{opt}})+2(N-2)\sin\left(\frac{\theta_{\mathrm{opt}}}{2}\right)}\right|_{N=2}
    =2\omega.
\end{align*}
However, in the large $N$ limit, we obtain
\begin{align*}
    \omega
    =
    \left.\frac{\omega(N^2+1)}{N^2-N+\sqrt{2N}}\right|_{N=\infty}
    \geq
    \left.\frac{(\omega_{\mathrm{M}}-\omega_{\mathrm{m}})N}{2\sin(\theta_{\mathrm{opt}})+2(N-2)\sin\left(\frac{\theta_{\mathrm{opt}}}{2}\right)}\right|_{N=\infty}
    =\frac{(\omega_{\mathrm{M}}-\omega_{\mathrm{m}})}{2}.
\end{align*}

\backmatter



\section*{Declarations}
The authors declare that they have no conflict of interest. Ethical approval is not applicable.

\section*{Data Availability Statement}

The data that support the findings of this study are openly available at the following URL/DOI: \url{https://doi.org/10.5281/zenodo.19020534}. 

This repository contains the structured parameter datasets (in .csv format), as well as descriptions of the solver and platform used to generate the simulations in this paper. It also contains the high-resolution $(1200~\text{pixel}\times1200~\text{pixel})$ phase evolution animations (.mp4) corresponding to each simulation scenario in the manuscript.
\color{black}

\bibliography{sn-bibliography}

\end{document}